\documentclass[10pt, reqno]{amsart}
\usepackage{graphicx} 

\usepackage{amsmath}
\usepackage{amssymb}
\usepackage{tikz-cd}
\usepackage{amsthm}
\usepackage{mathrsfs}
\usepackage{fullpage}
\usepackage{float}
\usepackage{keytheorems}
\usepackage[capitalise]{cleveref}
\usepackage{mathabx}
\usepackage{enumitem}
\usepackage{tikz}

\newcommand{\R}{\mathbb{R}}

\newcommand{\Z}{\mathbb{Z}}
\newcommand{\N}{\mathbb{N}}
\newcommand{\Q}{\mathbb{Q}}
\newcommand{\h}{\mathbb{H}}

\DeclareMathOperator{\Stab}{Stab}
\DeclareMathOperator{\Aff}{Aff}

\DeclareMathOperator{\Aut}{Aut}
\DeclareMathOperator{\Axis}{Axis}
\DeclareMathOperator{\Fix}{Fix}

\DeclareMathOperator{\Isom}{Isom}
\DeclareMathOperator{\BS}{BS}
\DeclareMathOperator{\id}{id}

\DeclareMathOperator{\tr}{tr}

\DeclareMathOperator{\Sol}{Sol}

\DeclareMathOperator{\CAT}{CAT}
\DeclareMathOperator{\DL}{DL}
\DeclareMathOperator{\Cay}{Cay}
\DeclareMathOperator{\Hom}{Hom}

\DeclareMathOperator{\Diag}{Diag}

\DeclareMathOperator{\GL}{GL}

\DeclareMathOperator{\Lie}{Lie}
\DeclareMathOperator{\AutIsom}{AutIsom}

\newcommand{\inv}{^{-1}}
\newcommand{\ab}{^{\text{ab}}}

\usepackage{xcolor}
\usepackage[colorinlistoftodos]{todonotes}

\definecolor{todoblue}{RGB}{255, 156, 0}

\usepackage[top=1.2in,bottom=1.2in,left=1.2in,right=1.2in]{geometry}

\newtheorem{theorem}{Theorem}[section]
\newtheorem{proposition}[theorem]{Proposition}
\newtheorem{lemma}[theorem]{Lemma}
\theoremstyle{definition}
\newtheorem{definition}[theorem]{Definition}

\title{Horocyclic products have Y-posets of hyperbolic structures}
\author{Noah Caplinger}
\date{\today}

\usepackage{subfiles} 

\begin{document}

\maketitle

\vspace{-1cm}

\begin{abstract}
    A \textit{hyperbolic structure} on a group $G$ is a (not necessarily properly discontinuous) cobounded action of $G$ on a Gromov hyperbolic space, considered up to coarsely $G$-equivariant quasi-isometry. We show that for groups $G$ acting geometrically and positively on a horocyclic product $X\bowtie Y$, all hyperbolic structures on $G$ come from the two actions on the factors. The ingredients of the proof include Malcev rigidity and a new (``vertical") boundary of horocyclic products. We also give the first example of a group acting geometrically on a horocyclic product of two mixed millefeuille spaces.
\end{abstract}

\vspace{-.2cm}

\section{Introduction}

A \textit{hyperbolic structure} on a group $G$ is a cobounded (not necessarily properly discontinuous) action of $G$ on a Gromov-hyperbolic space, considered up to coarsely $G$-equivariant quasi-isometry. The set\footnote{For set-theoretic reasons, we actually consider certain equivalence classes of generating sets whose Cayley graphs are hyperbolic. See \cref{def:cobounded_poset} and \cref{def:hyp_poset}.} of hyperbolic structures $\mathcal{H}(G)$ admits a partial ordering according to the amount of information the action remembers about the group: $X \preceq Y$ if there is a coarsely equivariant coarsely Lipschitz map $Y \to X$. Hyperbolic structures on $G$ are called \textit{elliptic}, \textit{lineal}, \textit{quasi-parabolic} or \textit{of general type} according to their place in the Gromov classification of hyperbolic actions (see \cref{thm:Gromov_Classification}). Lineal hyperbolic structures are exactly those which are quasi-isometric to $\R$, quasi-parabolic structures are those which fix a unique point at infinity, and elliptic structures are trivial.

The poset of hyperbolic structures has been computed for many particular groups, including the solvable Baumslag-Solitar groups \cite{BS_Actions} $G = \BS(1,n)$, mapping tori of Anosov maps on the torus \cite{AR} $G = \Z^2 \rtimes_A \Z$ (for $|\tr(A)| > 2$), the lamplighter groups \cite{Wreath_Actions} $G = (\Z/n \wr  \Z)$, and certain abelian-by-cyclic groups \cite{Valuations}. In each case, $\mathcal{H}(G)$ has a common form: there is a unique elliptic structure which is dominated by a unique lineal structure $G \curvearrowright L$. Every other hyperbolic structure is quasi-parabolic, dominates the unique lineal structure and lies in one of two subposets $\mathcal{P}_+(G), \mathcal{P}_-(G) \subset \mathcal{H}(G)$ according to whether the unique fixed point is attracting or repelling for some fixed hyperbolic element $t \in G$. These subposets intersect at the unique lineal structure, and each have a maximum element. We call a poset of this form a \textit{$Y$-poset}, see \cref{figure:Y-poset}.

The groups mentioned in the above paragraph have another similarity: they all act geometrically (properly discontinuously and cocompactly) on horocyclic products. Given two Gromov hyperbolic spaces $X$ and $Y$ equipped with Busemann functions $\beta_X$ and $\beta_Y$, their \textit{horocyclic product} is $$X\bowtie Y := \{(x,y) \mid \beta_X(x) = -\beta_Y(y)\}.$$ Actions on horocyclic products were studied extensively by Levitin and the author \cite{CaplingerLevitin} and by Ferragut \cite{Ferragut:Rigidity}, \cite{Ferragut:Visual_Boundary}. In this paper we show that all groups acting geometrically and \textit{positively} (without swapping factors, see \cref{subsec:ThmB}) on horocyclic products have $Y$-posets of hyperbolic structures. We refer to \cref{sec:Prelim} for all undefined terms.

\begin{theorem}
\label{thm:main}

    Let $X$ and $Y$ be proper, geodesically complete $\CAT(-\kappa)$ spaces (for $\kappa > 0$) each equipped with a Busemann function about a non-isolated point at infinity. Give $X\bowtie Y$ a metric arising from an admissible monotone norm. 
    Let $G$ be a finitely-generated group acting geometrically and positively on $X\bowtie Y$. Then

    \begin{enumerate}
        \item There is a unique lineal hyperbolic structure $G\curvearrowright L$ on $G$. 
    \end{enumerate}


    Let $t \in G$ be an element which acts hyperbolically on $L$. Then $t$ also acts hyperbolically on any quasi-parabolic structure. Let $\mathcal{P}_+(G)$ denote the hyperbolic structures that fix the attracting fixed point of $t$, and let $\mathcal{P}_-(G)$ denote the structures that fix the repelling point of $t$.

    \begin{enumerate}[resume]
        \item Let $\ast$ denote the trivial hyperbolic structure on $G$. Then $\mathcal{H}(G) = \{\ast, L\} \cup \mathcal{P}_+(G) \cup \mathcal{P}_-(G)$. The elements of $\mathcal{P}_+(G)\setminus \{L\}$ are incomparable to the elements of $\mathcal{P}_-(G)\setminus \{L\}$.\
        \item The subposet $\mathcal{P}_+(G)$ and $\mathcal{P}_-(G)$ each have maximum elements. Further assume that the element $t$ acts on $X\bowtie Y$ with positive height change\footnote{If $t$ has negative height change, the roles of $X$ and $Y$ may be swapped.}. Then the maximum element of $\mathcal{P}_+(G)$ can be represented by a $G$-action on a space quasi-isometric to $X$. The maximum element of $\mathcal{P}_-(G)$ can be represented by an action on a space quasi-isometric to $Y$. 
    \end{enumerate}
\end{theorem}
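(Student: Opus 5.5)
The proof will first settle the lineal and elliptic parts, and then handle quasi-parabolic and general-type structures in turn.

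\textbf{Height homomorphism and the lineal structure.} Because the action is positive, $G$ preserves the Busemann functions $\beta_X$ and $\beta_Y$ up to additive constants. This gives a height homomorphism $h\colon G\to\R$ with $\beta_X(gx)=\beta_X(x)+h(g)$. Since the action is cobounded and heights are unbounded on $X\bowtie Y$, the homomorphism $h$ is nontrivial, and the action of $G$ on $\R$ through $h$ gives a lineal structure $L$.

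For uniqueness, recall that any lineal structure comes from a quasi-morphism, and for lineal structures relevant here from a homomorphism $G\to\R$ up to scaling. I would show that $\ker h$ is "distorted" in a strong sense: every element of $\ker h$ is conjugated by height-changing elements so that it contracts exponentially, using the $\CAT(-\kappa)$ contraction toward the point at infinity. Consequently every homogeneous quasimorphism vanishes on $\ker h$, so it factors through $h(G)$. The image $h(G)$ may be of higher rank, and this is where I would invoke Malcev rigidity: a lattice-type argument shows that $h(G)$ is cyclic, or at least that all homomorphisms to $\R$ are proportional to $h$. Either way, $L$ is unique. The claim that $t$ is loxodromic on any quasi-parabolic structure follows similarly: the Busemann quasi-character of a quasi-parabolic structure is a homogeneous quasimorphism, hence a nonzero multiple of $h$, so $h(t)\neq 0$ forces $t$ to be loxodromic.

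\textbf{Ruling out general type and splitting the quasi-parabolic structures.} An action of general type would produce free subsemigroups and nonzero quasimorphisms not factoring through $h$. I would rule this out using the amenability-like structure of $G$; more precisely, I expect to show directly that every cobounded hyperbolic action of $G$ fixes a point at infinity. The tool for this is the new vertical boundary of $X\bowtie Y$: the boundary points $\partial X\setminus\{\xi_X\}$ and $\partial Y\setminus\{\xi_Y\}$, together with the two "vertical ends."

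Given a quasi-parabolic structure $G\curvearrowright Z$ with fixed point $\zeta$, $t$ acts loxodromically, and $\zeta$ is either the attracting or the repelling point of $t$. This gives the decomposition into $\mathcal{P}_+(G)$ and $\mathcal{P}_-(G)$. Incomparability follows because domination preserves the global fixed point: a coarsely Lipschitz equivariant map sends $t$'s attracting point to its attracting point, and a structure in $\mathcal{P}_+\setminus\{L\}$ cannot dominate one whose fixed point is the repelling point.

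\textbf{Maximal elements (main obstacle).} The actions $G\curvearrowright X$ and $G\curvearrowright Y$, obtained by projecting $X\bowtie Y$ onto its factors, are cobounded and fix $\xi_X$ and $\xi_Y$ respectively. If $t$ has positive height change, the first lies in $\mathcal{P}_+$ and the second in $\mathcal{P}_-$. To show that $X$ dominates every $Z\in\mathcal{P}_+(G)$, I would build a coarsely Lipschitz equivariant map $X\to Z$ in three steps:
\begin{enumerate}[label=(\roman*)]
\item Pick a basepoint $z\in Z$ and map the orbit in $X\bowtie Y$ to $Gz$.
\item Show that the orbit map $X\bowtie Y\to Z$ coarsely factors through the projection to $X$, i.e.\ points with the same $X$-coordinate up to bounded error map to boundedly close points.
\item For (ii), show that moving in the $Y$-direction at fixed $X$-coordinate is generated by elements of $\ker h$ that are exponentially contracted by $t^{-n}$. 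In $Z$ these elements fix $\zeta$, which is the attracting point of $t$, so $t^{n}$ shrinks their displacement, giving bounded displacement.
\end{enumerate}
Step (iii) is the core and the main difficulty. It requires identifying the horospherical subgroups via the vertical boundary and controlling word lengths in $X\bowtie Y$ (where the admissible monotone norm enters). My first attempt would be a ping-pong/contraction estimate comparing $d_Z(z,gz)$ with $e^{-n}$-scaled displacements along the $t$-axis. The case of $\mathcal{P}_-$ is symmetric, using $Y$.
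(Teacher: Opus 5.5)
There is a genuine gap, and it sits exactly at the step you flag as the core, (iii). Your target in (ii) is the right one: $\pi_Z\colon X\bowtie Y\to Z$ should coarsely factor through the projection to $X$. It would even give a more direct route than the paper's. With that factorization, $x\mapsto \pi_Z(x,y_x)$ (any $y_x$ with $(x,y_x)\in X\bowtie Y$) is a domination $X\to Z$ for every $Z\in\mathcal{P}_+(G)$. So $X$ is the maximum outright, without the supremum lemmas of Abbott--Balasubramanya--Osin.

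The mechanism you propose cannot produce a uniform constant. A single conjugate $t^{-n}gt^{n}$, $n\ge 0$, with $g$ of bounded length does have uniformly bounded $Z$-length: its displacement is that of $g$ at $t^{n}z_0$, which lies on a quasi-ray to $\zeta$. But a general element of $\ker h$ with bounded $X$-displacement and $Y$-displacement of horospherical size $R$ is not of this form. Already in a Sol lattice $\Z^2\rtimes_A\Z$, the conjugates $t^{-n}gt^{n}$ with $g$ in a fixed finite set have expanded coordinate in $\lambda^{n}F$ for a finite set $F$. Such an element is therefore a product of unboundedly many conjugates, roughly one per scale, i.e.\ about $\log R$ of them. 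Summing per-factor bounds then gives $O(\log R)$, not $O(1)$. Any element-by-element contraction or ping-pong estimate has this defect.

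The uniformity has to come from hyperbolicity of $Z$ along whole vertical geodesics. First, $\pi_Z$ sends vertical geodesics to uniform quasi-geodesics. Since $\zeta$ is the attracting point of $t$ and the Busemann character at $\zeta$ is a multiple of $h$, the end where height tends to $+\infty$ converges to $\zeta$. Now, for points $(x,y)$ and $(x,y')$, choose vertical geodesics $V\ni x$ in $X$ and $W\ni y$, $W'\ni y'$ in $Y$. Since $W,W'$ are asymptotic at $\infty_Y$, the geodesics $V\bowtie W$ and $V\bowtie W'$ coarsely coincide at very negative heights. Their images therefore have the same two endpoints, and the Morse lemma makes them uniformly Hausdorff close. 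Comparing points of equal $h_Z$-height then bounds $d_Z(\pi_Z(x,y),\pi_Z(x,y'))$ uniformly.

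The paper proves only the boundary version of this. It works on the vertical boundary $\mathcal{V}\cong\partial_\infty X\times\partial_\infty Y$ (the space of vertical geodesics, not the object you describe). It shows $\partial\pi_Z$ factors through $\partial_\infty X$ using continuity of $\partial\pi_Z$, the opposite dynamics of $t$ on the two factors, and density of hyperbolic fixed points. It then shows only that any $Z\in\mathcal{P}_+(G)$ dominating $X$ is equivalent to $X$, and needs the supremum lemmas to get a maximum.

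Part 1 also puts the difficulty in the wrong place. Your stated reason for homogeneous quasimorphisms vanishing on $\ker h$ is false. Conjugation by powers of a height-changing element contracts one factor-component of an element of $\ker h$ while expanding the other, so no such conjugation contracts a general element of $\ker h$. The conclusion itself survives more cheaply. Each $g\in\ker h$ is non-loxodromic on both factors, so $d_X(x,g^nx)$ and $d_Y(y,g^ny)$ are $o(n)$. Then $d_{X\bowtie Y}\le d_X+d_Y+\min(d_X,d_Y)$ and Milnor--Schwarz give $|g^n|_S=o(n)$, which kills every homogeneous quasimorphism on $g$.

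Conversely, $h(G)\cong\Z$ has nothing to do with Malcev rigidity. It is discreteness of $h(G)$, a consequence of properness (Lemma 6.2.3 of Caplinger--Levitin). In the paper, Malcev rigidity is used precisely for the vanishing on $\ker h$. Vertex stabilizers for the action on $\DL(n,m)$ are virtually lattices in $N_1\times N_2$, commensurated by $t$ acting with no eigenvalues of modulus one. This forces a homomorphism $k$ with $k(t)=0$ to vanish on them.

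Three smaller points. Lineal structures that swap the two ends do not come from quasimorphisms on $G$. They need the paper's argument via the end-preserving subgroup of index at most two: $\ker h$ acts elliptically, so the structure is dominated by $L$ and hence equal to it. General type is excluded simply because $G$ is amenable, not by a boundary argument. Finally, genuine actions on $X$ and $Y$ by product maps are only available after the upgrade of Theorem B of Caplinger--Levitin to millefeuille spaces. That is why the statement speaks of spaces quasi-isometric to $X$ and $Y$.
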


\begin{figure}
\begin{center}

\begin{tikzpicture}[
    node font=\large,
    every node/.style={inner sep=0pt},scale=0.85
]
 
\node[draw, circle, minimum size=24pt, line width=0.8pt] (star) at (0,-2) {$*$};
\node[draw, circle, minimum size=24pt, line width=0.8pt] (R) at (0,0) {$\mathbb{R}$};
\node[draw, circle, minimum size=24pt, line width=0.8pt] (X) at (-2.5,3.2) {$X$};
\node[draw, circle, minimum size=24pt, line width=0.8pt] (Y) at ( 2.5,3.2) {$Y$};

\draw[thick] (star) -- (R);
 
\foreach \ang in {115,125,135,145} {
    \draw[thick] (R) -- ++(\ang:1.5);
}
 
\foreach \ang in {-35,-47,-59,-71} {
    \draw[thick] (X) -- ++(\ang:1.5);
}
 
\foreach \ang in {35,45,55,65} {
    \draw[thick] (R) -- ++(\ang:1.5);
}
 
\foreach \ang in {-110,-122,-134,-146} {
    \draw[thick] (Y) -- ++(\ang:1.5);
}
 
\begin{scope}[shift={(-1.25,1.6)}, rotate=38]
    \draw[blue, dotted, line width=1.3pt] (0,0) ellipse (1.05cm and 2.8cm);
\end{scope}
 
\begin{scope}[shift={(1.25,1.6)}, rotate=-38]
    \draw[red, dotted, line width=1.3pt] (0,0) ellipse (1.05cm and 2.8cm);
\end{scope}
 
\node[blue] at (-3.7,0.8) {$\mathcal{P}_+(G)$};
\node[red]  at ( 3.7,0.8) {$\mathcal{P}_-(G)$};
 
\end{tikzpicture}

\end{center}

\caption{\label{figure:Y-poset} A cartoon of the Hasse diagram of a Y-poset}

\end{figure}

\cref{thm:main} heuristically says that all hyperbolic structures on groups acting geometrically on $X\bowtie Y$ come from the actions on $X$ and $Y$ given by \cite[Theorem B]{CaplingerLevitin}. It does not recover the aforementioned results in that it does not say anything specific about $\mathcal{P}_{\pm}(G)$, while \cite{Valuations}, \cite{BS_Actions}, \cite{Wreath_Actions}, and \cite{AR} all give complete computations of $\mathcal{H}(G)$. \cref{thm:main} does apply to a much wider class of groups than those mentioned above, including 

\begin{enumerate}
    \item mapping tori of expanding maps on nilmanifolds,
    \item mapping tori of many Anosov maps on nilmanifolds (See \cite[Subsection 1.2]{CaplingerLevitin}), 
    \item the cross-wired lamplighter groups of \cite{CWLL} and 
    \item the new group $G_p$ which we introduce in \cref{sec:New_Group}.
\end{enumerate}

Parts 1 and 2 of \cref{thm:main} are similar to Section 4 of Abbott-Balasubramanya-Osin \cite{Valuations} and indeed part of the proof was inspired by \cite[Proposition 4.1]{Valuations}. Unlike in \cite[Proposition 4.1]{Valuations}, we do not know from the outset that our group has virtually cyclic abelianization. This is a relatively easy step in \cite{Valuations}, but is a major part of the proof in our more general setting.

\vspace{.3cm}

\noindent\textbf{Proof outline.} The primary difficulty in proving Part 1 of \cref{thm:main} is in showing that $G\ab$ is virtually $\Z$, the unique $\Z$ factor being represented by the height function $h:G \to \Z$. If there is another $\Z$ factor, there is a homomorphism $k:G\to \Z$ which does not vanish on $\ker(h)$. By \cite[Theorem B]{CaplingerLevitin}, $\ker(h)$ is (up to finite noise) comprised of many finitely-generated torsion-free nilpotent groups on which elements $\{h \neq 0\}$ act by conjugation via Anosov maps. We then use Malcev rigidity to show that no such homomorphism $k$ can exist.

Our proof of Part 3 of \cref{thm:main} involves the use of a new boundary object of a horocyclic product, which we call the \textit{vertical boundary} $\mathcal{V}(X\bowtie Y)$. The vertical boundary is similar to, but distinct from, the visual boundary studied by Ferragut \cite{Ferragut:Visual_Boundary}. If $Z \in \mathcal{P}_+(G)$ dominates the action on $X$, then we show that the induced map $\pi_Z:X\bowtie Y \to Z$ gives a continuous, equivariant map $\partial \pi_Z:\mathcal{V}(X\bowtie Y) \to \partial_\infty Z$. We then use the dynamics of the actions of $G$ on the boundaries to show that $\partial \pi_Z$ must factor through the natural map $\mathcal{V}(X\bowtie Y) \to \partial_\infty X$. This fact is then used to construct a quasi-inverse $X \to Z$, showing that $X$ and $Z$ are equivalent.

\vspace{.3cm}

\noindent\textbf{Organization of the paper.} The paper has five sections. Section 2 is preliminary, introducing hyperbolic structures and horocyclic products. Parts 1 and 2 of \cref{thm:main} are proven in Section 3, and Part 3 is proven in Section 4. In Section 5, we give an example of a group acting geometrically on a horocyclic product of mixed millefeuille spaces.

\vspace{.3cm}

\noindent\textbf{Acknowledgments.} The author would like to thank Benson Farb for his constant encouragement and Denis Osin, Carolyn Abbott, Tullia Dymarz, Daniel Levitin, Daniel Groves, Kevin Whyte and Kevin Wortman for helpful conversations. He would also like to thank Denis Osin for introducing him to \cite{ABO} and thereby the theory of hyperbolic structures, and Daniel Levitin and Benson Farb for their extensive comments on early drafts.

\section{Hyperbolic structures and horocyclic products}
\label{sec:Prelim}

This section introduces the poset of hyperbolic structures on a group and horocyclic products. The first two subsections are based on \cite[Section 3]{ABO}.

\subsection{Generalities on group actions}

Throughout this paper, all actions on metric spaces are assumed to be isometric.

\begin{definition}
    An action of a group $G$ on a metric space $X$ is 
    \begin{enumerate}
        \item \textit{proper} if for every bounded subset $B\subset X$, the \textit{coarse stabilizer} $\{g \in G \mid g\cdot B \cap B \neq \emptyset\}$ is finite.
        \item \textit{cobounded} if there is a subset $B \subset X$ of finite diameter whose $G$-translates cover $X$, meaning $X = \bigcup_{g\in G} g\cdot B$.
        \item \textit{geometric} if it is proper and cobounded.
    \end{enumerate}
\end{definition}

\begin{definition}
    Let $G$ act on metric spaces $X$ and $Y$. A function $f:X\to Y$ is said to be
    \begin{enumerate}
        \item \textit{coarsely $G$-equivariant} if there is some $C > 0$ so that $d_Y(f(g\cdot x), g\cdot f(x)) < C$ for all $g \in G$ and\footnote{\cite{ABO} allows this constant to depend on $x$, but these definitions are equivalent in the case of cobounded actions and this definition is more convenient.} $x \in X$.
        \item \textit{coarsely Lipschitz} if there are constants $A>1, \ B > 0$ so that $$d_Y(f(x),f(y)) \leq Ad(x,y) + B.$$
        \item a \textit{dominating map} if $f$ is both coarsely $G$-equivariant and coarsely Lipschitz.\footnote{\cite{ABO} uses a slightly different definition, which is equivalent for cobounded actions.} If there is a dominating map $f:X \to Y$, we say $X$ \textit{dominates} $Y$ and write $Y \preceq X$.
        \item a \textit{quasi-isometry} if there are constants $A > 1, \ B > 0$ so that $$\frac{1}{A}d_X(x,y) - B \leq d_Y(f(x),f(y)) \leq Ad_X(x,y) + B$$ and if $f(X)$ is \textit{coarsely dense} in $Y$. A subset of a metric space is coarsely dense if there is a constant $C>0$ so that its $C$-neighborhood is the entire space.
        \item a \textit{coarse equivalence of actions} if $f$ is a coarsely $G$-equivariant quasi-isometry.
    \end{enumerate}
\end{definition}

We note that two cobounded actions $G \curvearrowright X$ and $G \curvearrowright Y$ are equivalent if and only if $Y \preceq X$ and $X \preceq Y$, see \cite[Lemma 3.8]{ABO}.

The theory of groups acting coboundedly on metric spaces runs parallel to the theory of groups acting transitively on sets, with the below variant of the Milnor--Schwartz Lemma playing the role of the orbit-stabilizer theorem. For a group $G$ generated by $S = S\inv$, let $\Cay(G,S)$ denote the Cayley graph of $G$ with respect to $S$. 

\begin{lemma}{\cite[Lemma 3.11]{ABO}}
\label{lem:Milnor-Schwartz}
    Let $G$ be a group acting coboundedly on a metric space $X$. Let $B \subset X$ be a subset of diameter $D$ so that $\bigcup_{g \in G}g\cdot B = X$ and let $b \in B$. Then $G$ is generated by the set $$S_{X,B} = \{g \in G \mid d_X(b,g\cdot b) < 2D+1\},$$ and the orbit map $\mathcal{O}:\Cay(G,S_{X,B}) \to X$ sending $g\to g\cdot b$ is a coarse equivalence of actions. 
\end{lemma}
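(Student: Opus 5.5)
The plan is the standard Milnor--Schwarz argument: generation follows from a covering/chaining argument, and the quasi-isometry estimates follow from comparing word length with displacement of the basepoint $b$. The statement as written does not assume $X$ is geodesic or even coarsely connected. Without such a hypothesis (e.g.\ $X = \Z$ with the action of the trivial group, $B=\{0\}$, $D=0$, which is not cobounded anyway, but discrete-type examples where translates of $B$ are far apart show the issue), generation can fail. So I will assume, as in \cite{ABO}, that $X$ is a geodesic metric space. More generally it suffices that $X$ be $1$-coarsely connected: any two points are joined by a chain $x_0,\dots,x_n$ with $d(x_{i},x_{i+1})\le 1$ and $n \le d(x,y)+1$. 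This is exactly where the $+1$ in $2D+1$ is used.

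\emph{Generation.} Let $g\in G$. Join $b$ to $g\cdot b$ by a geodesic and choose points $x_0=b, x_1,\dots,x_n=g\cdot b$ along it with $d(x_i,x_{i+1})\le 1$ and $n\le d(b,g b)+1$. By coboundedness, each $x_i$ lies in some $g_i\cdot B$; take $g_0=1$ and $g_n=g$. Then $d(g_i b, x_i)\le D$, so
\[
d(g_i b, g_{i+1} b)\le 2D+1-\epsilon
\]
after arranging $d(x_i,x_{i+1})<1$ (use a slightly finer subdivision). By invariance of the metric,
\[
d(b, g_i^{-1}g_{i+1} b) < 2D+1,
\]
so $s_i := g_i^{-1}g_{i+1}\in S_{X,B}$. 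Hence $g=s_0s_1\cdots s_{n-1}$, and $S=S_{X,B}$ generates $G$. The same computation gives the bound $|g|_S\le d(b,gb)+2$ (up to the subdivision constant), which will serve as the lower quasi-isometry estimate.

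\emph{Quasi-isometry and equivariance.} The orbit map $\mathcal O(g)=g b$ is exactly equivariant, since $\mathcal O(hg)=h\,\mathcal O(g)$. For the upper bound, if $g=s_1\cdots s_k$, then the triangle inequality and invariance of the metric give
\[
d(b,gb)\le \sum_i d(b,s_i b)\le (2D+1)k,
\]
so $d(gb,hb)\le (2D+1)\,d_S(g,h)$. The generation step gives $d_S(g,h)\le d(gb,hb)+2$. Together these show $\mathcal O$ is a quasi-isometric embedding on vertices; points on edges of the Cayley graph are within $1/2$ of a vertex, so $\mathcal O$ extends with bounded error. Coarse density holds because every point of $X$ lies within $D$ of some $g b$. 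Therefore $\mathcal O$ is a coarse equivalence of actions.

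The only delicate step is the chaining in the generation argument, specifically making the strict inequality $<2D+1$ hold. I will handle it by subdividing into steps of length $<1$, which changes the linear constant only slightly. The rest is bookkeeping.
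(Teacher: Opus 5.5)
Your argument is correct and is the standard Milnor--Schwarz chaining argument behind the cited \cite[Lemma 3.11]{ABO} (the paper itself gives no proof). You are also right that a geodesic hypothesis, which \cite{ABO} assumes, is needed: $\Z$ acting on $\Z\subset\R$ by translations with $B=\{0\}$ gives $S_{X,B}=\{0\}$. The same example shows that your aside claiming $1$-coarse connectivity with steps $\le 1$ suffices is false for the strict inequality $<2D+1$; your actual proof subdivides geodesics into steps of length $<1$, so it is unaffected.
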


\begin{lemma}{\cite[Lemmas 3.9 and 3.12]{ABO}}
\label{lem:posets_equiv}
    Let $G$ act coboundedly on metric spaces $X$ and $Y$, and let $S_X$ and $S_Y$ be any two generating sets constructed as in the previous lemma. Then $X$ dominates $Y$ if and only if the identity map $\Cay(X, S_X) \to \Cay(G,S_Y)$ is Lipschitz.
\end{lemma}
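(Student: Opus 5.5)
The approach is to pass back and forth between each space and its Cayley graph using the orbit maps of \cref{lem:Milnor-Schwartz}, and to reduce domination to a statement about the identity map on $G$. I read the identity map in the statement as $\Cay(G,S_X)\to\Cay(G,S_Y)$. Write $\mathcal{O}_X:\Cay(G,S_X)\to X$ and $\mathcal{O}_Y:\Cay(G,S_Y)\to Y$ for the orbit maps $g\mapsto g\cdot b_X$ and $g\mapsto g\cdot b_Y$, where $b_X\in B_X$ and $b_Y\in B_Y$ are the basepoints.

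\emph{Explicit coarse inverse.} First I need a coarsely equivariant coarse inverse $p_X:X\to G$ of $\mathcal{O}_X$, and I will build it by hand. For each $x\in X$, pick $g_x$ with $x\in g_x\cdot B_X$ and set $p_X(x)=g_x$. Then $d_X(\mathcal{O}_X(p_X(x)),x)\le D$.

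For $h\in G$, the point $hx$ lies in both $hg_x\cdot B_X$ and $g_{hx}\cdot B_X$. Hence
\[
d_X(hg_x\cdot b_X,\; g_{hx}\cdot b_X)\le 2D,
\]
so $g_{hx}^{-1}hg_x\in S_X$. Thus $d_{S_X}(p_X(hx),h\,p_X(x))\le 1$, which is coarse equivariance. The same argument shows $p_X$ is coarsely Lipschitz. Points of $X$ at distance $\le 1$ give group elements $g,g'$ with $d(g b_X,g'b_X)\le 2D+1$. Since $X$ need not be geodesic, I would chain along a path or chain of points, or simply invoke the coarse equivalence already asserted in \cref{lem:Milnor-Schwartz}, whose quasi-inverse can be taken to be this $p_X$ up to bounded distance. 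The analogous map $p_Y$ is built the same way.

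\emph{Forward direction.} Suppose $f:X\to Y$ is dominating. Consider
\[
\phi=p_Y\circ f\circ\mathcal{O}_X:\Cay(G,S_X)\to\Cay(G,S_Y).
\]
This is a composite of coarsely Lipschitz, coarsely equivariant maps, so it is both, with some constants $A,B,C$.

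Coarse equivariance gives $d_{S_Y}(\phi(g),g\phi(1))\le C$. Left multiplication is an isometry of $\Cay(G,S_Y)$, so $d_{S_Y}(g\phi(1),g)=|\phi(1)|_{S_Y}$. Hence the identity is within bounded distance $C'$ of $\phi$, and therefore coarsely Lipschitz.

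Distances in a Cayley graph are integers, so adjacent vertices are sent to vertices at distance at most $A+B+2C'$. Since $d_{S_X}$ is a path metric, the identity is then genuinely Lipschitz with that constant.

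\emph{Converse direction.} If the identity is Lipschitz, set
\[
f=\mathcal{O}_Y\circ\mathrm{id}\circ p_X:X\to Y.
\]
Each factor is coarsely Lipschitz and coarsely equivariant; for $\mathcal{O}_Y$ this holds exactly. Hence $f$ is a dominating map, and $X$ dominates $Y$.

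\emph{Main obstacle.} The only nontrivial point is the coarse equivariance and coarse Lipschitzness of the quasi-inverse $p_X$, in particular handling a possibly non-geodesic $X$. I would rely on the coarse-equivalence conclusion of \cref{lem:Milnor-Schwartz} for the Lipschitz bound and use the explicit covering argument above for equivariance.
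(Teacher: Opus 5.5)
Your proof is correct. The paper gives no argument of its own beyond the citation to \cite{ABO}, and your route is the standard one: pass a domination through the orbit maps of \cref{lem:Milnor-Schwartz} and your coarsely equivariant coarse inverse $p_X$, use left-invariance of $d_{S_Y}$ to see that $p_Y\circ f\circ\mathcal{O}_X$ stays uniformly close to the identity, and upgrade coarse Lipschitz to Lipschitz because word metrics are integer-valued path metrics.

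Your caution about non-geodesic $X$ is well founded. The coarse Lipschitz bound on $p_X$ genuinely needs the quasi-isometry conclusion of \cref{lem:Milnor-Schwartz}, and that conclusion holds only under a connectedness hypothesis such as geodesicity. For example, let $\Z$ act by translations on $(\Z,\log(1+|m-n|))$ and on $\R$. Then the identity $\Cay(\Z,S_X)\to\Cay(\Z,S_Y)$ is Lipschitz, but no domination exists. So relying on the previous lemma as a black box, rather than on your chaining aside, is the right call.
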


Then generating sets of $G$, considered up to bilipschitz equivalence of their Cayley graphs exactly parameterize cobounded actions on metric spaces up to coarse equivalence. This motivates the following definition.

\begin{definition}[\cite{ABO}]
\label{def:cobounded_poset}

Let $\mathcal{G}(G)$ denote the set of equivalence classes of generating sets of $G$, where two generating sets $S_1,S_2$ are equivalent if the identity map $\Cay(G,S_1) \to \Cay(G,S_2)$ is bilipschitz. This set is equipped with the relation $\preceq$, where $[S_1] \preceq [S_2]$ if the identity map $\Cay(G,S_1) \to \Cay(G,S_2)$ is Lipschitz. We call $\mathcal{G}(G)$ the \textit{partially ordered set of cobounded actions of $G$}.
\end{definition}

If $G$ is generated by some finite $S = S\inv$, then for every other generating set $T$, the identity map $\Cay(G,S) \to \Cay(G,T)$ is Lipschitz. Then $\mathcal{G}(G)$ has a maximum element whenever $G$ is finitely-generated. In the analogy between cobounded actions on metric spaces and transitive actions on sets, this maximum element is the free action.  

We note one simple fact about coarsely equivariant maps between cobounded actions which will be useful later in the paper. This is an analog of the fact that an equivariant map between two transitive $G$-actions is determined by where it sends any element.

\begin{lemma}[Coarse commutativity]
\label{lem:coarse_commutativity}

Let $G$ act coboundedly on spaces $X$ and $Y$. Then for any two dominations $f,h:X \to Y$, there is a constant $E$ so that $d_Y(f(x),h(x)) < E$. In particular, any two compositions $f_1\circ \cdots f_k$ and $g_1\circ \cdots g_\ell$ of dominations $f_i,g_j$ with the same domain and codomain are equal up to bounded error. 

\end{lemma}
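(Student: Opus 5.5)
The plan is to fix a basepoint and compare $f$ and $h$ along an orbit, using coarse equivariance to move the comparison back to the basepoint. Since $G \curvearrowright X$ is cobounded, choose a set $B\subset X$ of finite diameter $D$ whose $G$-translates cover $X$, and fix $b\in B$. Let $C$ be a common coarse equivariance constant for $f$ and $h$ (the larger of the two), so that
\[
d_Y(f(gx),gf(x))<C \quad\text{and}\quad d_Y(h(gx),gh(x))<C
\]
for all $g\in G$ and $x\in X$. Let $(A,K)$ be common coarse Lipschitz constants for $f$ and $h$.

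The estimate proceeds in three steps.
\begin{enumerate}
\item \emph{Orbit points.} For $g\in G$, since $G$ acts by isometries,
\[
d_Y(f(gb),h(gb)) \le d_Y(f(gb),gf(b)) + d_Y(gf(b),gh(b)) + d_Y(gh(b),h(gb)) < 2C + d_Y(f(b),h(b)).
\]
Here $d_Y(f(b),h(b))$ is a fixed finite number, so this bound is uniform in $g$.
\item \emph{Arbitrary points.} Any $x\in X$ lies in some $gB$, so $d_X(x,gb)\le D$. The coarse Lipschitz property then gives $d_Y(f(x),f(gb))\le AD+K$, and the same bound holds for $h$.
\item \emph{Conclusion.} Combining the two steps,
\[
d_Y(f(x),h(x)) < 2(AD+K) + 2C + d_Y(f(b),h(b)) =: E,
\]
which is the desired constant.
\end{enumerate}

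For the ``in particular'' statement, the key observation is that compositions of dominations are again dominations, so it reduces to the first part. I would check both properties separately.
\begin{itemize}
\item \emph{Coarse Lipschitz.} Composing coarse Lipschitz maps gives a coarse Lipschitz map, with constants obtained by multiplying and adding.
\item \emph{Coarse equivariance.} Consider $f_1\circ f_2$, where $f_2$ has constant $C_2$ and $f_1$ has constants $C_1$, $(A_1,K_1)$. Then
\[
d(f_1f_2(gx),gf_1f_2(x)) \le d(f_1(f_2(gx)),f_1(gf_2(x))) + d(f_1(gf_2(x)),gf_1(f_2(x))) \le A_1C_2+K_1+C_1.
\]
\end{itemize}
Inducting on the number of factors, both $f_1\circ\cdots\circ f_k$ and $g_1\circ\cdots\circ g_\ell$ are dominations $X\to Y$ with the same domain and codomain. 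The first part then applies to them directly.

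No step here is a real obstacle. The only points needing care are that the constant $C$ is uniform in $x$, which is exactly how coarse equivariance is defined in this paper, and that coboundedness is used only on the domain $X$.
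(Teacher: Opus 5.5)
Your proof is correct and follows the same route as the paper's: you compare $f$ and $h$ at orbit points of a basepoint using coarse equivariance, then extend to all of $X$ using coboundedness and the coarse Lipschitz bound, which gives a constant of the same form $2(AD+K)+2C+d_Y(f(b),h(b))$. Your explicit check that compositions of dominations are again dominations fills in the ``in particular'' clause, which the paper leaves implicit.
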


We call this ``coarse commutativity" since it essentially says that any diagram of dominations commutes up to bounded error.

\begin{proof}
    Let $f,h:X \to Y$ be two $D$-coarsely equivariant, $(A,B)$-coarsely Lipschitz maps. Choose $x_0 \in X$ and let $C>0$ be large enough so that for every $x \in X$, there is a $g_x \in G$ so that $d_X(g_x \cdot x_0, x) < C$. We now compute:

    \begin{align*}
        d_Y(f(x),h(x)) &\leq 2(AC +B) + d_Y(f(g_x\cdot x_0), h(g_x\cdot x_0))\\
        &\leq 2(AC+ B) +2D + d_Y(g_x\cdot f(x_0), g_x\cdot h(x_0))\\
        &= 2(AC+ B) +2D + d_Y( f(x_0),  h(x_0))
    \end{align*}

    Then any two such maps are at bounded distance.
\end{proof}

\subsection{Hyperbolic spaces and hyperbolic structures}

In this paper, we are especially concerned with groups acting on Gromov hyperbolic spaces.

\begin{definition}
    Let $X$ be a geodesic metric space. 

    \begin{enumerate}
        \item We say $X$ is (Gromov) \textit{hyperbolic} if there is some $\delta > 0$ so that for any geodesic triangle $\Delta$, each side of $\Delta$ is contained in the union of the closed $\delta$-neighborhoods of the other two sides.
        \item Let $x,y,z \in X$. The \textit{Gromov product} of $x$ and $y$ with respect to $z$ is $$(x,y)_z = \frac{1}{2}(d(x,z) + d(y,z) - d(x,y)).$$
        \item Fix some basepoint $o \in X$. The \textit{(sequential) boundary,} denoted $\partial X$ is the set of equivalence classes of sequences $(x_i)_i \subset X$ of points satisfying $\liminf_{i,j\to\infty} (x_i,x_j)_o = \infty$, where two sequences $(x_i)_i$ and $(y_i)_i$ are equivalent if $\liminf_{i,j \to \infty}(x_i,y_i)_o = \infty$. A basis of neighborhoods of $[\eta] \in \partial X$ is given by $$U_r = \{q \in \partial X \mid \exists(x_i)_i, (y_i)_i \ \text{so that}\ [(x_i)] = [\eta], [(y_i)] = q \ \text{and} \ \liminf_{i,j\to \infty} (x_i,y_i)_o \geq r\}.$$
        \item Fix some basepoint $o \in X$. The \textit{geodesic boundary} of $X$, also denoted $\partial X$, is the set of equivalence classes of geodesic rays starting from $o$, where two rays are equivalent if they are at finite Hausdorff distance. The geodesic boundary is endowed with the compact-open topology.
        \item Let $G$ act on a hyperbolic space $X$. The \textit{limit set} of $G$ is $\Lambda(G):=\partial X \cap \overline{G \cdot x}$.
        \item Let $\infty \in \partial X$. The \textit{parabolic boundary} of $X$ at $\infty$ is $\partial_\infty X = \partial X \setminus\{\infty\}$.
    \end{enumerate}
\end{definition}

It is a standard fact that for proper, geodesic, hyperbolic spaces $X$, the sequential and geodesic boundaries are homeomorphic and independent of basepoint (see \cite[Proposition 2.14]{BoundariesHyperbolicGroups}). For non-proper spaces, the situation is a bit more delicate (see \cite[Remark 2.16]{BoundariesHyperbolicGroups}). We have occasion to use both definitions. For now, we just note that the spaces $X$ and $Y$ in \cref{thm:main} are assumed to be proper, while the hyperbolic structures we define below are not. 

\begin{definition}
\label{def:hyp_poset}
    The \textit{poset of hyperbolic structures} $\mathcal{H}(G)$ is the subposet of $\mathcal{G}(G)$ consisting of the classes of generating sets $[S]$ for which $\Cay(G,S)$ is Gromov hyperbolic. 
\end{definition}

There is a famous classification of group actions on hyperbolic spaces due to Gromov \cite{Gromov}.

\begin{theorem}[Gromov, \cite{Gromov}]
\label{thm:Gromov_Classification}
    Let $G$ act on a hyperbolic space $X$. Then exactly one of the following conditions holds.
    \begin{enumerate}
        \item $|\Lambda(G)| = 0$, or equivalently $G$ has bounded orbits. In this case, the action is called \textit{elliptic}.
        \item $|\Lambda(G)| = 1$. In this case, the action is called \textit{horocyclic}. Horocyclic actions are never cobounded.
        \item $|\Lambda(G)| = 2$. In this case, the action is called \textit{lineal}. 
        \item $|\Lambda(G)| = \infty$ and $G$ fixes a unique point on the boundary. In this case, the action is said to be \textit{focal}, or \textit{quasi-parabolic}.
        \item $|\Lambda(G)| = \infty$ and there is no fixed point on the boundary. In this case, the action is said to be of \textit{general type}. Groups with actions of general type always contain a free group. 
    \end{enumerate}
\end{theorem}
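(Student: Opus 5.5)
The plan is the standard one: split according to whether $G$ contains a loxodromic element, meaning some $g$ for which $n\mapsto g^n\cdot o$ is a quasi-geodesic. Such a $g$ has exactly two fixed points $g^{\pm}\in\partial X$, and $g^{+}$ attracts and $g^{-}$ repels under north--south dynamics. First I will record two routine facts. The limit set $\Lambda(G)$ does not depend on the chosen orbit, because any two orbits are at bounded Hausdorff distance. And $\Lambda(G)$ is $G$-invariant. I will work throughout with the sequential boundary, since $X$ need not be proper.

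\emph{Case 1: $G$ contains no loxodromic element.} If orbits are bounded then $\Lambda(G)=\emptyset$, which is the elliptic case. Otherwise I will prove the key lemma (Gromov's): if orbits are unbounded and there are no loxodromics, then the orbit $G\cdot o$ converges to a single boundary point $\xi$, so $|\Lambda(G)|=1$ and $\xi$ is fixed.

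The argument works with Gromov products. Choose $g_i$ with $d(o,g_io)\to\infty$. Absence of loxodromics forces $(g_i o, g_j o)_o$ to be large whenever both displacements are large. Indeed, if $(go,ho)_o$ were small compared with $d(o,go)$ and $d(o,ho)$, then one of $g$, $h$, $gh$, $g^{-1}h$ would have translation length bounded below by a definite amount. The usual ping-pong estimate for products with small Gromov product then produces a loxodromic, a contradiction. From this one extracts a unique limit point.

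I expect this lemma to be the main obstacle. It needs care with $\delta$-constants and with the fact that parabolic-type isometries can have unbounded orbits while every individual element has bounded or slowly growing orbits.

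\emph{Case 2: $G$ contains a loxodromic $g$.} Then $\{g^{\pm}\}\subset\Lambda(G)$. I will consider the set $F$ of fixed points of all loxodromics; it is $G$-invariant because $hgh^{-1}$ has fixed points $hg^{\pm}$. There are three subcases.
\begin{enumerate}
\item If every loxodromic has the same fixed pair $\{g^+,g^-\}$, I will show that every orbit lies in a bounded neighborhood of an axis of $g$. Hence $\Lambda(G)=\{g^\pm\}$, which is the lineal case.
\item If all loxodromics share exactly one common fixed point $\xi$ but not both, then $\xi$ is fixed by $G$. Conjugates $g^nhg^{-n}$ supply infinitely many distinct fixed points, so $|\Lambda(G)|=\infty$, which is the focal case. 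Uniqueness of the fixed point follows because any other fixed point would have to be common to all the loxodromics.
\item Otherwise there are two loxodromics $g,h$ with disjoint fixed sets. Ping-pong applied to $g^N$ and $h^N$ on small neighborhoods of their fixed points gives a free subgroup, and the orbit of the four fixed points under this free group is infinite. A global fixed point $\zeta$ is impossible: it would have to be fixed by both $g$ and $h$, so it would lie in $\{g^\pm\}\cap\{h^\pm\}=\emptyset$. This is the general type case.
\end{enumerate}
The conditions in the five cases are mutually exclusive by construction.

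Finally, for the claim that horocyclic actions are never cobounded: if the action is cobounded and $X$ is unbounded, then $X$ lies within bounded distance of $G\cdot o$. Take two points of $X$ far apart whose Gromov product at $o$ is small, for instance the two ends of a long geodesic through $o$. These are close to orbit points $go$ and $ho$. The estimate from Case 1 then produces a loxodromic among $g$, $h$, $g^{-1}h$. This contradicts the hypothesis of Case 1, so the horocyclic case cannot be cobounded.
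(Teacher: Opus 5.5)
The paper gives no proof of this theorem. It is quoted from Gromov and used as a black box, so there is no in-paper argument to compare yours against. Your outline is the standard proof: split on whether a loxodromic exists, then sort by the fixed pairs of loxodromics. Most steps work as sketched.
\begin{itemize}
\item The lineal subcase works because $fgf^{-1}$ is loxodromic with fixed pair $f\{g^{\pm}\}$. So $G$ permutes $\{g^{\pm}\}$, and Morse stability keeps orbits near a quasi-axis.
\item The non-coboundedness argument works once you say that coboundedness is what lets you move the midpoint of a long segment close to $o$. In general no long geodesic need have $o$ near its middle.
\end{itemize}

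The one step that is asserted rather than proved is the \emph{otherwise} in Case 2. The failure of your subcases (a) and (b) says only that no point is fixed by every loxodromic. That does not by itself give two loxodromics with disjoint fixed pairs: a priori the pairs could meet pairwise with no common point, e.g.\ loxodromics $g,h,k$ with fixed pairs $\{a,b\}$, $\{a,c\}$, $\{b,c\}$.

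Conjugation rules this out. $hgh^{-1}$ is loxodromic with fixed pair $\{a,hb\}$, and $hb\notin\{a,b,c\}$ because $h$ fixes only $a$ and $c$. Hence $hgh^{-1}$ and $k$ have disjoint fixed pairs. Running this argument shows the following: once two loxodromics share exactly one fixed point $a$, either every loxodromic fixes $a$, or two loxodromics have disjoint fixed pairs. That is the trichotomy you need.

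You should also make the Case 1 mechanism precise. A lower bound on displacement is not a loxodromicity criterion. The right tool is: if $d(o,fo)>2(f^{-1}o,fo)_o+c\delta$, then $f$ is loxodromic. Suppose none of $g$, $h$, $g^{-1}h$ is loxodromic. Combine this criterion with the identity $(g^{-1}ho,g^{-1}o)_o=d(o,go)-(go,ho)_o$ and the four-point condition. You get $(go,ho)_o\ge\tfrac12\min\{d(o,go),d(o,ho)\}-c'\delta$. This is exactly the estimate that both your horocyclic lemma and your non-coboundedness argument require, and $gh$ plays no role.
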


In the particular case of a cyclic group $\langle g \rangle$, only the first three cases can happen, in which case $g$ is called \textit{elliptic}, \textit{parabolic} or \textit{hyperbolic}. If $g$ acts hyperbolically, there is often a unique geodesic connecting the two fixed points at infinity---for instance, this is true if $X$ is geodesically complete and $\CAT(-\kappa)$, as in the hypotheses of \cref{thm:main}. In this case, this unique geodesic is called the \textit{axis} of $g$ and is denoted $\Axis_X(g)$. 

Since horocyclic actions are not cobounded, and the groups we consider are discrete and amenable, $\mathcal{H}(G)$ consists only of elliptic, lineal and quasi-parabolic actions. Of these, the quasi-parabolic actions are of particular interest. The below lemma implies that quasi-parabolic actions of amenable groups always dominate a lineal action.

\begin{proposition}{\cite[Corollary 3.9]{CCMT}}
\label{prop:ccmt_Busemann_function}
    Let $G$ be a locally compact group acting continuously by isometries on a Gromov hyperbolic space $X$ and fixing a boundary point $\xi \in \partial X$. If $G$ is amenable, then there is a continuous homomorphism $\beta_\xi: G \to \R$ satisfying $\beta_\xi(g) \neq 0$ if and only if $g$ acts on $X$ hyperbolically. 
\end{proposition}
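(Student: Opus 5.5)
The statement is \cref{prop:ccmt_Busemann_function}. The natural candidate for $\beta_\xi$ is the Busemann quasi-character at $\xi$. Fix a sequence (or quasi-geodesic ray) $(x_n)$ converging to $\xi$ and a basepoint $o \in X$, and set
$$b(g) = \limsup_{n\to\infty}\bigl(d(g\cdot o, x_n) - d(o,x_n)\bigr).$$
Because $G$ fixes $\xi$, the sequences $(g x_n)$ and $(x_n)$ both converge to $\xi$. Using $\delta$-hyperbolicity, the horofunction at $\xi$ is well defined up to an additive constant depending only on $\delta$. Comparing $b(gh)$ with $b(g)+b(h)$ through the identity $d(gh\cdot o, x_n) = d(h\cdot o, g^{-1}x_n)$ and applying this uniqueness, we should get
$$|b(gh) - b(g) - b(h)| \le C(\delta),$$
so $b$ is a quasi-morphism. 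Next, homogenize by setting $\beta_\xi(g) = \lim_n b(g^n)/n$. This limit exists by Fekete-type subadditivity. The result is a homogeneous quasi-morphism, constant on conjugacy classes, and it agrees with $b$ up to bounded error.

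The key step is turning this homogeneous quasi-morphism into a homomorphism, and this is where amenability enters. We will not rely on the general statement that homogeneous quasi-morphisms on amenable groups are homomorphisms, since that needs care in the locally compact case. Instead we argue directly. Choose a left-invariant mean $m$ on bounded continuous functions on $G$ and define
$$\phi(g) = m\bigl(x \mapsto b(xg) - b(x)\bigr).$$
The function inside the mean is bounded by $|b(g)| + C$, so $\phi$ is well defined. The quasi-morphism inequality combined with invariance of the mean then gives $\phi(gh) = \phi(g) + \phi(h)$ exactly, so $\phi$ is a homomorphism at bounded distance from $b$. It follows that $\phi = \beta_\xi$, because homogeneous maps at bounded distance coincide. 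Continuity needs two ingredients: $b$ is continuous, since $g\mapsto g\cdot o$ is continuous and $b$ is $1$-Lipschitz in the orbit point, and a homomorphism to $\R$ that is bounded on a neighborhood of the identity is continuous. The main technical obstacle I expect is the measurability and continuity needed for the mean in the locally compact setting. To handle it I would use left uniformly continuous functions, replacing $b$ by a smoothing if necessary.

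Finally we check the dichotomy. If $g$ is hyperbolic, it has two fixed points, and $\xi$ must be one of them, since otherwise $g$ could not fix $\xi$. Its orbit $g^n o$ moves linearly toward or away from $\xi$, so $|b(g^n)|$ grows linearly in $n$ and $\beta_\xi(g)\ne 0$. If $g$ is elliptic, its orbits are bounded, so $b(g^n)$ is bounded and $\beta_\xi(g)=0$. If $g$ is parabolic, it has a unique fixed point, which must be $\xi$. Its orbit has zero stable translation length, $\lim_n d(o,g^n o)/n = 0$, and $|b(g^n)| \le d(o,g^n o)$, so again $\beta_\xi(g)=0$. Conversely, $\beta_\xi(g)\neq 0$ forces $d(o,g^no)$ to grow linearly, which by the Gromov classification (\cref{thm:Gromov_Classification}) applied to $\langle g\rangle$ means $g$ is hyperbolic.
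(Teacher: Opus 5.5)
Your argument is correct and is essentially the standard proof behind \cite[Proposition 3.7 and Corollary 3.9]{CCMT}, which the paper cites rather than proves (and later uses in your form, with $\beta_\xi$ at bounded distance from the horofunction cocycle): a quasi-morphism from horofunctions, homogenization, amenability to make it an exact continuous homomorphism, and the dichotomy via stable translation length. One small slip: with $\phi(g)=m\bigl(x\mapsto b(xg)-b(x)\bigr)$ the additivity step needs $m$ to be invariant under right translations, so either use a right-invariant mean (e.g.\ $f\mapsto m\bigl(x\mapsto f(x^{-1})\bigr)$) or define $\phi(g)=m\bigl(x\mapsto b(gx)-b(x)\bigr)$ with your left-invariant one.
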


The map $\beta_\xi$ is called the \textit{Busemann character} of $G$ at $\xi$, and measures much group elements move points away from $\xi$. In the next section, we will give an explicit description of $\beta_\xi$ with additional assumptions on $X$. For now we just note that if $G$ has a unique lineal structure $L$ (as asserted by Part 1 of \cref{thm:main}), then the action of $G$ on $\R$ via $\beta_\xi$ is equivalent to $L$ and therefore any quasi-parabolic structure must dominate $L$.

\subsection{Horocyclic products}

\cref{thm:main} concerns groups acting geometrically on horocyclic products, which we now define.

Let $X$ and $Y$ be proper, geodesically complete $\CAT(-\kappa)$ spaces. This means that closed balls are compact, every geodesic segment can be extended to a bi-infinite path isometric to $\R$, and $X$ satisfies the $\CAT(-\kappa)$ inequality, below.

\begin{definition}
    Let $X$ be a geodesic metric spaces. Let $\kappa >0$ and let $\h^2_\kappa$ denote the unique complete, simply connected surface of constant curvature $-\kappa$. For any three points $x_1,x_2,x_3\in X$, there are points $X_i \in \h^2_\kappa$ so that $d_X(x_i,x_j) = d_{\h^2_\kappa}(X_i,X_j)$. For $y$ on a geodesic connecting $x_i$ and $x_j$, there is a \textit{comparison point} $Y$ so that $d_X(x_i, y) = d_{\h^2_\kappa}(X_i,Y)$ and $d_X(x_j, y) = d_{\h^2_\kappa}(X_j,Y)$. 
    
    A space $X$ is said to be $\CAT(-\kappa)$ if for all points $x_1,x_2,x_3 \in X$ and $y_1, y_2$ on the geodesic triangle $\Delta(x_1,x_2,x_3)$, the comparison points $Y_1,Y_2 \in \h^2_\kappa $ satisfy $d_X(y_1,y_2)\leq d_{\h^2_\kappa}(Y_1,Y_2)$
\end{definition}

$\CAT(-\kappa)$ spaces are in particular Gromov hyperbolic. 

\begin{definition}[Busemann function]
    Let $X$ be a geodesically complete $\CAT(-\kappa)$ space. Fix a distinguished boundary point $\infty_X \in X$ and let $\gamma$ be a length-parameterized geodesic satisfying $\gamma(\infty) = \infty_X$. A \textit{Busemann function at $\infty_X$} is a function $$\beta(x) = \lim_{t\to \infty} d(\gamma(t),x) - t.$$ A level set of a Busemann function is a \textit{horosphere}. The negative of a Busemann function is called a \textit{height function} $h = -\beta$.
\end{definition}

Re-parameterizing $\gamma$ changes $\beta$ by an additive constant. In a $\CAT(-\kappa)$ space, this is the only possible ambiguity: if $\eta$ is another geodesic satisfying $\eta(\infty) = \gamma(\infty) = \infty_X$, then the associated Busemann functions differ by a constant (see \cite[Lemma 2.1.12]{CaplingerLevitin}). We will call geodesics representing $\infty_X$ \textit{vertical geodesics}. Every vertical geodesic $\eta$ makes the composition $\beta\circ \gamma$ an isometry $\R \to \R$ (see the remark following Definition 2.1.8. in \cite{CaplingerLevitin}).  

\begin{proposition}{\cite[Proposition 2.1.14]{CaplingerLevitin}}
    Let $X$ be $\CAT(-\kappa)$, let $\infty_X \in \partial X$ and let $\beta$ be a Busemann function based at $\infty_X$. Let $G$ be a group acting on $X$ which fixes $\infty_X$. Then $$h(g) = \beta(x) - \beta(g\cdot x)$$ does not depend on $x$, and is a homomorphism $h:G\to \R$. 
\end{proposition}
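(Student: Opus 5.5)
The plan is to reduce everything to the cocycle identity for Busemann functions along vertical geodesics. Fix a vertical geodesic $\gamma$ with $\gamma(\infty)=\infty_X$, and let $\beta$ be its Busemann function. For $g\in G$, the isometry $g$ fixes $\infty_X$, so $g\circ\gamma$ is again a vertical geodesic. By the fact quoted above (\cite[Lemma 2.1.12]{CaplingerLevitin}), the Busemann function $\beta_{g\gamma}$ built from $g\circ\gamma$ differs from $\beta$ by an additive constant: $\beta_{g\gamma}=\beta+c_g$ for some $c_g\in\R$.

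First I check that $\beta(x)-\beta(gx)$ is independent of $x$. For any $x$,
\[
\beta(gx)=\lim_{t\to\infty} d(\gamma(t),gx)-t=\lim_{t\to\infty} d(g^{-1}\gamma(t),x)-t=\beta_{g^{-1}\gamma}(x)=\beta(x)+c_{g^{-1}},
\]
so $\beta(x)-\beta(g x)=-c_{g^{-1}}$, a constant. Thus $h(g)$ is well defined. The only input is the uniqueness of Busemann functions up to constants. That uniqueness is the step where $\CAT(-\kappa)$ is used: two vertical geodesics are asymptotic, and in fact exponentially close after reparametrization. I take it from the cited lemma rather than reprove it.

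Next I verify the homomorphism property by telescoping at a single point. For $g,k\in G$,
\[
h(gk)=\beta(x)-\beta(gkx)=\bigl(\beta(x)-\beta(kx)\bigr)+\bigl(\beta(kx)-\beta(g(kx))\bigr)=h(k)+h(g).
\]
The second bracket equals $h(g)$ because $h(g)$ may be evaluated at the point $kx$, by the independence just shown. Hence $h$ is a homomorphism $G\to\R$, and $h(e)=0$ follows trivially.

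The main obstacle is the independence from $x$, which rests entirely on the uniqueness of Busemann functions up to an additive constant. If I had to prove it directly, I would use the following argument. Two geodesic rays $\gamma,\eta$ toward $\infty_X$ are at finite Hausdorff distance. After shifting the parameter of $\eta$, convexity of the distance function in $\CAT(0)$ makes $t\mapsto d(\gamma(t),\eta(t))$ nonincreasing and bounded. Then $|d(\gamma(t),x)-d(\eta(t),x)|$ is bounded, and the difference $\beta_\gamma-\beta_\eta$ is $1$-Lipschitz. Finally, the $\CAT(-\kappa)$ condition makes $d(\gamma(t),\eta(t))\to 0$ after a suitable shift, so the two Busemann functions differ exactly by the shift constant.
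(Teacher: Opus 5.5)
Your proof is correct and is the natural argument behind this statement. The paper gives no proof beyond citing \cite[Proposition 2.1.14]{CaplingerLevitin}, but it records exactly your key input: Busemann functions of two vertical geodesics differ by an additive constant \cite[Lemma 2.1.12]{CaplingerLevitin}, and applying this to $g^{-1}\circ\gamma$ and telescoping is all that is needed. Two small asides on your sketch of that lemma, neither of which affects your main argument: uniqueness up to constants already holds in any complete $\CAT(0)$ space, and a difference of two $1$-Lipschitz functions is a priori only $2$-Lipschitz.
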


We are finally prepared to define horocyclic products.

\begin{definition}
    The \textit{horocyclic product} of two $\CAT(-\kappa)$ spaces $X$ and $Y$ equipped with Busemann functions $\beta_X,\beta_Y$ is $$X\bowtie Y = \{(x,y) \mid \beta_X(x) = -\beta_Y(y) \},$$ equipped with a metric defined as follows. Let $N$ be a norm on $\R^2$ so that 1. $N(1,1) = 1$, 2. $N(x,y) \geq \frac{x+y}{2}$ (these two properties make the norm \textit{admissible}) and 3. $N$ is non-decreasing in each factor ($N$ is \textit{monotone}). Let $d_X,d_Y:(X\bowtie Y)^2 \to \R$ denote the distances between the $X$ and $Y$ coordinates of pairs of points in $X\bowtie Y$. The length of a path $\gamma:[0,1]\to X\bowtie Y$ is defined to be $$\ell_N(\gamma) = \sup_{0 = t_0< t_1 < \cdots t_n = 1 } \sum N(d_X(\gamma(t_i),\gamma(t_{i+1})),d_Y(\gamma_i,\gamma_{i+1})).$$ The metric on $X\bowtie Y$ is then defined to be the infimum of the lengths of all paths connecting pairs of points.
\end{definition}

\subsection{Heintze groups, millefeuille spaces and \cite[Theorem B]{CaplingerLevitin}}
\label{subsec:ThmB}

This paper relies strongly on the results of Levitin and the author \cite{CaplingerLevitin}, in particular Theorem B, which says roughly that geometric actions on horocyclic products can be upgraded to actions on ``nice" horocyclic products. The first step in the proof of \cref{thm:main} is to make this upgrade. In this subsection, we define these ``nice" spaces, and explain the statement of \cite[Theorem B]{CaplingerLevitin}.

\begin{definition}[Heintze group]
    A \textit{Heintze group} is a group of the form $N\rtimes_\alpha \R$, where $N$ is a simply connected nilpotent Lie group, $D:\Lie(N) \to \Lie(N)$ is a derivation of its Lie algebra whose eigenvalues have positive real part, and $\alpha$ is the one parameter subgroup of $\Aut(N)$ given by $t \to e^{tD}$. 
\end{definition}

Heintze \cite{Heintze} proved that every homogeneous manifold of negative sectional curvature is isometric to a Heintze group equipped with some left-invariant Riemannian metric and conversely that every Heintze group has some left-invariant metric with negative curvature. In this paper, Heintze groups will always be implicitly endowed with such a metric.

\begin{lemma}
    Let $N \rtimes_{e^{tD}} \R$ be a Heintze group. Then $e^{tD}$ is uniformly expanding in the sense that there is a $\lambda > 1$ so that for all $v \in \Lie(N)$, we have $\|e^{tD}v\| > \lambda^t \|v\|$ for all $t > 0$.
\end{lemma}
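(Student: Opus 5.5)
The plan is to prove the lemma for a suitably \emph{adapted} inner-product norm on $\Lie(N)$, obtained from a Lyapunov-type argument. The statement cannot hold for an arbitrary norm: if $D$ has a nontrivial Jordan block, then $\|e^{tD}v\|$ can shrink for small $t$. For a general norm one only gets $\|e^{tD}v\| \geq c\lambda^t\|v\|$ with some $c>0$, by equivalence of norms on a finite-dimensional space. So I will fix the norm as follows, and read the lemma with that norm; any left-invariant metric on the Heintze group may be taken to restrict to it on $\Lie(N)$. Also, the strict inequality requires $v \neq 0$.

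\textbf{Step 1 (decay of $e^{-sD}$).} Let $a>0$ be the minimum of the real parts of the eigenvalues of $D$. Using the Jordan form, each entry of $e^{-sD}$ is a sum of terms of the form (polynomial in $s$) $\cdot\, e^{-\mu s}$, where $\mu$ is an eigenvalue of $D$. Hence for any fixed auxiliary Euclidean norm $|\cdot|$ we have $|e^{-sD}| \leq C(1+s)^{m} e^{-as}$. In particular, $\int_0^\infty |e^{-sD}|^2\,ds < \infty$.

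\textbf{Step 2 (Lyapunov form).} Define the symmetric matrix
\[
P = \int_0^\infty e^{-sD^{T}}e^{-sD}\,ds .
\]
By Step 1 the integral converges, and $P$ is positive definite because the integrand is positive definite for every $s$. Differentiating $e^{-sD^{T}}e^{-sD}$ in $s$ and integrating from $0$ to $\infty$ yields the Lyapunov identity
\[
D^{T}P + PD = I .
\]
Set $\|v\|^2 = \langle v, Pv\rangle$.

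\textbf{Step 3 (growth estimate).} Fix $v \neq 0$ and write $w(t) = e^{tD}v$. Then
\[
\frac{d}{dt}\|w\|^2 = \langle w, (D^{T}P+PD)w\rangle = |w|^2 \geq \frac{1}{\|P\|_{\mathrm{op}}}\,\|w\|^2 ,
\]
where the last inequality uses $\langle w,Pw\rangle \leq \|P\|_{\mathrm{op}}|w|^2$. By Grönwall's inequality, $\|w(t)\|^2 \geq e^{t/\|P\|_{\mathrm{op}}}\|v\|^2$. Taking $\lambda = e^{1/(2\|P\|_{\mathrm{op}})} > 1$ gives $\|e^{tD}v\| \geq \lambda^t\|v\|$.

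To get a strict inequality for $t>0$, either replace $\lambda$ by any $\lambda' \in (1,\lambda)$, or observe that the derivative is strictly positive.

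\textbf{Main obstacle.} The only substantive point is Step 1, the exponential decay of $e^{-sD}$ in the presence of Jordan blocks. It follows from the Jordan decomposition: the polynomial factors are absorbed, since $(1+s)^m e^{-as} \leq C' e^{-a's}$ for any $a' < a$. Everything else is a formal computation.
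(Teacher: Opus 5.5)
Your Steps 1--3 are correct, but they only prove that \emph{some} inner product on $\Lie(N)$ makes $e^{tD}$ uniformly expanding. The lemma is about a specific norm: the one induced on $\Lie(N)$ by the left-invariant negatively curved metric with which the paper equips every Heintze group. You correctly observe that the inequality fails for general norms, which shows this metric must be used, yet your argument never uses the curvature hypothesis.

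The gap is the sentence asserting that the metric may be taken to restrict to $P$. The metric is not yours to choose. The Heintze group is a factor of a millefeuille space on which $G$ acts by isometries. In Step 3 of Section 3, the lemma is combined with the fact that $A\in\AutIsom(N)$ is an isometry of \emph{that} norm, to conclude that $A\alpha_s$ has no eigenvalues on the unit circle; $A$ need not preserve your $P$.

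Nor is $P$ compatible with negative curvature in general. Take $N=\R^2$ and $D=\begin{pmatrix}1&2\\0&1\end{pmatrix}$. Then $P=\begin{pmatrix}1/2&-1/2\\-1/2&3/2\end{pmatrix}$, and in a $P$-orthonormal basis $D$ becomes $\begin{pmatrix}1&\sqrt2\\0&1\end{pmatrix}$. Let $S_0,S_1$ be its symmetric and skew parts. The plane spanned by a unit $X\in\Lie(N)$ and the unit normal to $\Lie(N)$ has sectional curvature $-\langle (S_0^2+[S_0,S_1])X,X\rangle$. Here $S_0^2+[S_0,S_1]=\begin{pmatrix}1/2&\sqrt2\\\sqrt2&5/2\end{pmatrix}$ has negative determinant, so some such plane is positively curved. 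Since $N$ is abelian, any other choice of normal direction only multiplies this form by a positive constant. Hence no left-invariant metric restricting to $P$ on $\Lie(N)$ is negatively curved.

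The missing input is what the paper takes from Heintze: negative curvature forces the symmetric part of $D$, computed with respect to the \emph{given} inner product on $\Lie(N)$, to be positive definite. Geometrically, this symmetric part is (up to sign convention) the second fundamental form of the horospheres $N\times\{t\}$. Horospheres are uniformly strictly convex when curvature is bounded above by a negative constant, and a homogeneous negatively curved metric automatically has such a bound.

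Once $\langle Dw,w\rangle\geq m\|w\|^2$ holds for the given norm, your Step 3 runs verbatim with the given inner product in place of $P$. It yields $\lambda=e^{m}$, with strictness for $v\neq 0$ after shrinking $\lambda$ as you note. This is exactly the paper's proof, and your Steps 1 and 2 become unnecessary. So the substantive point is not the Jordan-block decay in Step 1; it is relating the fixed metric to $D$, which is where the curvature hypothesis has to enter.
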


\begin{proof}

Differentiate $f(t) = \|e^{tD}v\|^2$: $$f'(t) = (\langle e^{tD}v, e^{tD}v \rangle)' = 2\langle D e^{tD}v, e^{tD}v \rangle.$$ Since $N \rtimes_{e^{tD}} \R$ admits a negatively curved metric, \cite[Proposition 2, Part B]{Heintze}, says that the symmetric part of the derivation $D$ is positive definite. Then there is some $m > 0$ so that $\langle Dw,w\rangle > m \|w\|^2$ for all $w\neq 0$, and in particular $f'(t) \geq 2m\|e^{tD}\| = 2m f(t)$. Then $f(t) \geq f(0) e^{2mt} = \|v\|^2 e^{2mt}$ for $t > 0$, as required.

\end{proof}

For us, ``nice" spaces will be millefeuille spaces, defined below.

\begin{definition}[Millefeuille space]
    Let $X = N\rtimes \R$ be a Heintze group endowed with a left-invariant Riemannian metric of negative curvature and the Busemann function $\beta_X(n,t) = -t$. Let $T_k$ denote the $(k+1)$-regular tree equipped with a Busemann function $\beta_T$. The millefeuille space $X[k]$ is $$X[k] = \{(x,v) \in X\times T_k \mid \beta_X(x) = \beta_T(v)\}.$$

\end{definition}

Note that millefeuille spaces are not horocyclic products, being defined by the equation $\beta_X = \beta_T$ rather than $\beta_X = -\beta_T$. If either $X$ or $T$ is a line (that is, $N$ is trivial or $k = 2$), then $X[k]$ is $T_k$ or $X$ (respectively). When both $N \neq 1$ and $k > 2$, $X[k]$ is said to be a \textit{mixed millefeuille space}. Millefeuille spaces are $\CAT(-\kappa)$, with parabolic boundary homeomorphic to $N\times \Q_k$ (in particular, the parabolic boundary is metrizable). Millefeuille spaces were first introduced in \cite{CCMT}, whose definition was slightly more relaxed, allowing $X$ to be any $\CAT(-\kappa)$ space. 

Theorem B of \cite{CaplingerLevitin} says that if $G$ acts geometrically on a sufficiently regular horocyclic product $X\bowtie Y$, then $X$ and $Y$ may be taken to be millefeuille spaces. This will require the additional assumption that the action is positive. If $X = Y$ and the defining norm is symmetric, there is an isometry of $X\bowtie Y$ given by $(x,y) \to (y,x)$. To be a \textit{positive action} on $X\bowtie Y$ means that these types of isometries are disallowed. More formally, it means that the action fixes the upper and lower boundaries of $X\bowtie Y$---see \cite[Lemma 5.2.1.]{CaplingerLevitin}.

\begin{theorem}{\cite[Theorem B]{CaplingerLevitin}}
    Let $X\bowtie Y$ be a horocyclic product of proper, geodesically-complete $\CAT(-\kappa)$ spaces equipped with Busemann functions about non-isolated points at infinity. Let $G$ be a finitely-generated group acting geometrically and positively on $X\bowtie Y$. Then 
    
    \begin{enumerate}
        \item There are millefeuille spaces $Z,W$ quasi-isometric to $X$ and $Y$ respectively.
        \item There are cobounded actions of $G$ on $Z$ and $W$ satisfying $h_X(g) = h_Y(g)$.
        \item The induced action of $G$ on $Z\bowtie W$ is geometric. In particular, the actions on $X\bowtie Y$ and $Z\bowtie W$ are coarsely equivalent.
    \end{enumerate} 
\end{theorem}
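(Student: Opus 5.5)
The plan is to pass from the action on $X\bowtie Y$ to cobounded quasi-actions on the factors, to straighten these into isometric actions on millefeuille spaces using the structure theory of focal hyperbolic locally compact groups from \cite{CCMT}, and then to reassemble the two actions into a geometric action on $Z\bowtie W$.

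\textbf{Step 1: quasi-actions on the factors.} First I would show that each $g \in G$ induces quasi-isometries of $X$ and of $Y$, with uniform constants, that coarsely fix $\infty_X$ and $\infty_Y$. For this I would use a coarse rigidity theorem for quasi-isometries of horocyclic products, in the spirit of Eskin--Fisher--Whyte and of Ferragut \cite{Ferragut:Rigidity}. It should say that a quasi-isometry of $X\bowtie Y$ preserving the upper and lower boundaries (which is exactly what positivity guarantees) lies at bounded distance from a product $(x,y)\mapsto(\phi(x),\psi(y))$. Here $\phi$ and $\psi$ are quasi-isometries that coarsely preserve height, up to an additive constant $h(g)$ that is shared by the two factors.

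The hypothesis that $\infty_X$ and $\infty_Y$ are non-isolated is what makes this rigidity available. It ensures that the horospheres of $X$ and $Y$ are coarsely connected in the appropriate sense, so that the tetrahedron/coarse-differentiation style argument has something to work with. The output of this step is a pair of cobounded quasi-actions of $G$ on $X$ and on $Y$, each fixing the distinguished boundary point. Coboundedness follows from coboundedness on $X\bowtie Y$, because the projections $X\bowtie Y\to X$ and $X\bowtie Y\to Y$ are coarsely surjective.

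\textbf{Step 2: upgrading to millefeuille spaces (main obstacle).} This is the step I expect to be hardest. A cobounded quasi-action on a proper hyperbolic space fixing a boundary point must be quasi-conjugated to an isometric action on some proper geodesic hyperbolic space $Z'$. I would try a Kleiner--Leeb / Mosher--Sageev--Whyte type construction for this. Given such an action, take the closure $H$ of the image of $G$ in $\Isom(Z')$. Then $H$ is a locally compact group acting continuously, properly and cocompactly, and the action is focal.

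Since $G$ is amenable-like here, or more directly since $H$ fixes a boundary point and is locally compact and focal, the classification of \cite{CCMT} applies. It gives that $H$ acts geometrically on a millefeuille space $Z$, possibly after passing to a cocompact subgroup. I would then transport the $G$-action to $Z$ through $H$, which yields a cobounded isometric action of $G$ on $Z$ that is quasi-isometric to $X$. The same argument applied to $Y$ produces $W$.

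Once this is in place, the relation $h_Z(g)=h_W(g)$ should follow from Step 1. Both Busemann characters of \cref{prop:ccmt_Busemann_function} coarsely agree with the common height change $h(g)$, and since they are homomorphisms $G\to\R$ that coarsely agree, they agree exactly.

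\textbf{Step 3: reassembly.} The last step is to show that the diagonal action of $G$ on $Z\bowtie W$ is geometric. Because $h_Z(g)=h_W(g)$, the diagonal action preserves $Z\bowtie W$. Next I would build a coarsely equivariant map $X\bowtie Y\to Z\bowtie W$ from the product of the two quasi-isometries $X\to Z$ and $Y\to W$. Any height mismatch would be corrected by a bounded vertical adjustment, and the result should be a quasi-isometry, because the horocyclic product metric is determined coarsely by the factor metrics together with the height.

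With that map in hand, properness and coboundedness transfer from $X\bowtie Y$ to $Z\bowtie W$. Properness transfers because coarse stabilizers of bounded sets in $Z\bowtie W$ are contained in coarse stabilizers of bounded sets in $X\bowtie Y$. This also gives the coarse equivalence of the actions on $X\bowtie Y$ and $Z\bowtie W$.
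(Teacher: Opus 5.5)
\textbf{The gap is in Step 2.} Suppose you grant a quasi-isometric rigidity theorem for horocyclic products in Step 1 (\cite{Ferragut:Rigidity} is the natural candidate). Even then you only get \emph{quasi-actions} of $G$ on $X$ and $Y$, and these are cobounded but far from proper. Step 2 then needs a general theorem that straightens such a quasi-action on an arbitrary proper focal $\CAT(-\kappa)$ space into an isometric action on a \emph{proper} space. No such theorem is available:
\begin{itemize}
\item Kleiner--Leeb is specific to higher-rank symmetric spaces and buildings.
\item Mosher--Sageev--Whyte is specific to bounded-valence bushy trees.
\item Already for Heintze groups, the statement you need is a Tukia-type theorem. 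It is known only in special cases (e.g.\ real hyperbolic space) and is open in general.
\end{itemize}
The one construction that always works is the Cayley graph of \cref{lem:Milnor-Schwartz}, built from the elements moving a basepoint a bounded amount. That graph is locally infinite, so its isometry group is not locally compact, and \cite{CCMT} has nothing to act on. Everything after Step 2 needs a locally compact group acting geometrically on a proper hyperbolic space, so the argument does not go through as written.

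\textbf{The missing idea is that you never need to leave the isometric category.} The paper only imports this theorem from \cite{CaplingerLevitin}, but its remarks in \cref{subsec:ThmB} show the intended route. $G$ acts by isometries on $X\bowtie Y$, and positivity is exactly what lets one invoke \cite[Theorem C]{CaplingerLevitin}. That result says every element acts by an honest product map $(x,y)\mapsto(f(x),g(y))$ with $f\in\Isom(X)_\infty$ and $g\in\Isom(Y)_\infty$. This gives genuine cobounded isometric actions of $G$ on the proper spaces $X$ and $Y$, and the rest follows:
\begin{itemize}
\item Let $H_X$ be the closure of the image of $G$ in $\Isom(X)_\infty$. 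It is locally compact and acts properly and cocompactly on $X$.
\item $H_X$ is focal: its limit set is all of $\partial X$, which is infinite because $\infty_X$ is not isolated. So \cite{CCMT} applies to $H_X$ directly.
\item The action of $G$ on $Z$ is the composite $G\to H_X\to\Isom(Z)$. It is cobounded because the image of $G$ is dense in $H_X$.
\end{itemize}
Your hedge ``possibly after passing to a cocompact subgroup'' would break exactly this transport step.

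\textbf{There is also a smaller error in your height argument.} An equivariant quasi-isometry $X\to Z$ need not preserve heights up to additive error, only up to a multiplicative factor. So you only get $\beta_Z=\lambda h$ and $\beta_W=\mu h$ with $\lambda,\mu>0$, possibly different. You must renormalize the factors so that $\lambda=\mu$ before $Z\bowtie W$ is even $G$-invariant. This needs some care when a factor contains a tree, since rescaling the metric changes the edge lengths. Likewise, the product map in Step 3 is a quasi-isometry of horocyclic products only once both factor maps respect height up to additive error.
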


This theorem lets us assume that $X$ and $Y$ are millefeuille spaces, and moreover that the action on $X\bowtie Y$ is by maps of the form $(x,y) \to (f(x),g(y))$ for isometries $f \in \Isom(X)$ and $g \in \Isom(Y)$ fixing their respective points at infinity and satisfying $h_X(f) = -h_Y(g)$. We call such isometries \textit{product maps}. A product map $F = (f,g)$ with non-zero height change will be called \textit{hyperbolic}, and (when it exists) its \textit{axis} is $\Axis(F) = \Axis(f)\bowtie \Axis(g) \subset X\bowtie Y$. The two actions of $G$ on $X$ and $Y$ will end up being the maximum elements of $\mathcal{P}_{\pm}(G)$. 

The group of product maps will be denoted $\Isom(Z)_\infty \times_{-h} \Isom(W)_\infty$, the fibered product of $\Isom(Z)_\infty$ and $\Isom(W)_\infty$ along $h_Z$ and $-h_W$. By composing $G \to \Isom(Z)_\infty \times_{-h} \Isom(W)_\infty$ with the height function on the first factor, we obtain a map $h:G \to \R$. By \cite[Lemma 6.2.3]{CaplingerLevitin}, if $G$ acts geometrically on $X\bowtie Y$, then $h$ has discrete image. Considered as an action on $\R$, the map $h:G \to \Z$ will represent the unique lineal action in \cref{thm:main}. A theorem of Adams \cite[Theorem 6.8]{Adams} implies that $\Isom(Z)_\infty \times_{-h} \Isom(W)_\infty$ is amenable, so $G$ is also amenable (see also \cite[Section 6.1]{CaplingerLevitin}). 

The statement of Theorem B given in \cite{CaplingerLevitin} does not make a positivity assumption, and instead passes to a finite index subgroup. This finite index subgroup is introduced by \cite[Theorem C]{CaplingerLevitin}, which only requires passage to a finite index subgroup when the action is not positive. Restricting to a finite index subgroup changes the possible geometric actions---in particular, one might no longer have actions on $X$ or $Y$ if the action is not positive---so we should not expect a theorem similar to \cref{thm:main} to hold without this condition. Positivity of the action is not used in the paper except to invoke the above theorem.

\section{Unique lineal action}
\label{sec:virt_Cyclic}

In this section, we prove Part 1 of \cref{thm:main}, that a group $G$ acting geometrically on a horocyclic product $X\bowtie Y$ has a unique lineal action. This turns out to be essentially equivalent to showing that $G$ has virtually $\Z$ abelianization, so we spend the majority of this section proving this.

\begin{proposition}
\label{prop:virt_ab}
    Let $G$ act geometrically and positively on a horocyclic product. Then $G\ab$ is virtually $\Z$. 
\end{proposition}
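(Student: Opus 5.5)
The plan is to reduce to the case of millefeuille factors and then show that every homomorphism $k:G\to\Z$ factors through the height homomorphism $h$. First apply \cite[Theorem B]{CaplingerLevitin}: after replacing $X,Y$ by millefeuille spaces $Z=X_0[k_1]$, $W=Y_0[k_2]$ with $X_0=N_1\rtimes\R$ and $Y_0=N_2\rtimes\R$, we may assume that $G$ acts geometrically on $Z\bowtie W$ by product maps. By \cite[Lemma 6.2.3]{CaplingerLevitin}, $h:G\to\R$ has discrete image. The image is nontrivial, since a cobounded action must change height. So we may take $h:G\to\Z$ surjective and write $K=\ker h$. Since $G$ is finitely generated, $G\ab$ is a finitely generated abelian group. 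It therefore suffices to show that $\operatorname{rank} G\ab=1$, i.e.\ that every homomorphism $k:G\to\Z$ vanishes on $K$; then $k$ factors through $h$ up to finite index.

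Next I would describe $K$ concretely. The group $K$ preserves each horosphere $H_0=\{h=0\}\subset Z\bowtie W$ and acts on it properly and coboundedly. The horosphere $H_0$ is coarsely $(N_1\times N_2)\times(\text{two sets of tree vertices at a fixed level})$. Let $K_0\le K$ be the stabilizer of a pair of tree vertices $(v,w)$ at height $0$. Then $K_0$ acts geometrically on $N_1\times N_2$ (a fibre of $H_0$) up to a finite kernel. Thus $K_0$ is finite-by-(lattice $\Gamma$ in the simply connected nilpotent group $N=N_1\times N_2$). Fix $t\in G$ with $h(t)=1$. Conjugation by $t^{-n}$ sends $K_0$ to the stabilizer of the pair $(t^{-n}v, t^{-n}w)$, and these pairs exhaust the vertices as $n\to\infty$ (vertices at height $0$ lie below common ancestors in the trees). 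Hence $K$ is the ascending union of the subgroups $t^{-n}K_0t^{n}$. In particular, $K_0$ and $tK_0t^{-1}$ are commensurable. When a tree factor is trivial this is simpler, since $K$ is then itself virtually a lattice.

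The key step uses Malcev rigidity. Let $\Gamma\le K_0$ be a finite-index torsion-free lattice in $N$. The groups $\Gamma$ and $t\Gamma t^{-1}$ are commensurable lattices in $N$ with the same rational Malcev completion. By Malcev rigidity, the isomorphism $\Gamma\cap t^{-1}\Gamma t\to t\Gamma t^{-1}\cap\Gamma$ given by conjugation by $t$ extends uniquely to an automorphism $\phi$ of $N$. Geometrically, $\phi$ is the map induced by $t$ on $N_1\times N_2$, which is $e^{D_1}$ on $N_1$ and $e^{-D_2}$ on $N_2$ up to a compact part. Hence $d\phi$ has no eigenvalue of modulus $1$, and the same holds on $\Lie(N)/[\Lie(N),\Lie(N)]$ (using the uniform expansion lemma for Heintze groups). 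Again by Malcev, $k|_\Gamma$ extends to a continuous homomorphism $\ell:N\to\R$, which factors through the abelianized Lie algebra. Since $k(t\gamma t^{-1})=k(\gamma)$, the homomorphisms $\ell\circ\phi$ and $\ell$ agree on a finite-index subgroup of a lattice. Two homomorphisms to $\R$ that agree there agree on $N$, because a lattice is Zariski dense and homomorphisms to $\R$ are determined on finite-index subgroups. So $\ell$ is $\phi$-invariant, which forces $\ell=0$. Therefore $k$ vanishes on $\Gamma$, hence on $K_0$ (a torsion-by-$\Gamma$ extension maps to $\Z$ trivially), hence on every $t^{-n}K_0t^n$, and so on $K$.

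I expect the main obstacle to be the second step: rigorously extracting the structure ``$K$ is an ascending union of conjugates of a finite-by-lattice group $K_0$'' from the geometric action, and identifying the conjugation automorphism $\phi$ with the hyperbolic map coming from the Heintze derivations. I would first try to read this off from \cite[Theorem B]{CaplingerLevitin} and its proof, in particular the description of point stabilizers of the tree factors, rather than rederive it. The compact (elliptic) part of $\Isom(N_i\rtimes\R)$ must be checked not to introduce eigenvalues of modulus one on the abelianization.
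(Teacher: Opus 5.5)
Your reduction to showing that every homomorphism $k\colon G\to\Z$ vanishes on $K=\ker h$, and your Malcev-rigidity step, are essentially the paper's argument (compare \cref{prop:k_vanishes}). Two things you do there work well. Passing to $\Lie(N)/[\Lie(N),\Lie(N)]$ discards the inner-automorphism part of conjugation by $t$. The compact part you flag is harmless: $A$ is an isometry of $\Lie(N)$, so $A\alpha_s$ is still uniformly expanding on $\Lie(N_1)$ and contracting on $\Lie(N_2)$.

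The genuine gap is the passage from $K_0$ to $K$. The claim that $K=\bigcup_n t^{-n}K_0t^{n}$ is an ascending union is false once both tree factors are nontrivial. The vertices $t^{-n}(v,w)$ form a single $\langle t\rangle$-orbit, which is a quasi-line in $\DL(n,m)$ and does not lie at height $0$. Their stabilizers are also not nested. Moving up in one tree enlarges the stabilizer there, but moving up in one tree means moving down in the other, where the stabilizer shrinks. So having common ancestors in each tree separately gives nothing in the horocyclic product. Two examples show this:
\begin{itemize}
\item $\Z/2\wr\Z$ acts simply transitively on the vertices of $\DL(2,2)$, so $K_0=1$ while $K=\bigoplus_{\Z}\Z/2$.
\item For $G_p$ from \cref{sec:New_Group}, work in the coordinates given by $P_{\Q_p}$ and pick a suitable vertex. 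Then $K_0=\Z^2\subset\Z[1/p]^2=K$ and $t^{-n}K_0t^{n}=p^n\Z\times p^{-n}\Z$, and the union of these misses $(1/p,1/p)$.
\end{itemize}
Hence your final step (``hence on every $t^{-n}K_0t^n$, and so on $K$'') does not follow. Relatedly, $K$ is virtually a lattice only when \emph{both} tree factors are trivial; for $\BS(1,q)$ one has $K=\Z[1/q]$.

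The missing idea is the power trick from Step 2 of the paper. Any $g\in K$ has zero height change, so it is elliptic on both trees and fixes a vertex $a$ of the first tree and a vertex $b$ of the second. Since the trees are locally finite, $g$ permutes the finite ball about $a$ containing $v$ and the finite ball about $b$ containing $w$. So $g^N$ fixes $(v,w)$ for some $N\ge 1$, i.e.\ $g^N\in K_0$. Then $N\,k(g)=k(g^N)=0$, so $k(g)=0$.

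The power is genuinely needed. In the lamplighter example a nontrivial lamp fixes a vertex in each tree but no vertex of $\DL(2,2)$, so $K$ is not even the union of all vertex stabilizers.

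You also need to justify the commensurability of $K_0$ and $tK_0t^{-1}$ directly, since you derived it from the false ascending-union claim. Local finiteness does this: $\Stab_G(v,w)$ has a finite orbit on the vertex $t(v,w)$, and vice versa, so their intersection has finite index in each. With these two replacements your argument is complete.
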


We now prove Part 1 of \cref{thm:main} ($G$ has a unique lineal structure) assuming \cref{prop:virt_ab}. This argument is modeled off of the proof of Proposition 4.1 in \cite{Valuations}.

\begin{proof}[Proof of Part 1 of \cref{thm:main}]

    We first claim that there is a unique orientable lineal structure. A lineal structure is said to be \textit{orientable} if it acts trivially on its boundary. Because $G$ has virtually $\Z$ abelianization, the space of homomorphisms $\Hom(G,\R)$ is one dimensional. Since $G$ is amenable, every pseudocharacter\footnote{A \textit{pseudocharacter} on $G$ is a map $f:G\to \R$ so that there is a constant $D$ so that $|f(gh) - f(g)-f(h)| < D$ for all $g,h \in G$, and so that the restriction of $f$ to every cyclic subgroup is a homomorphism. Psuedocharacters will only be used to invoke \cite[Theorem 4.22]{ABO}.} is a genuine homomorphism, so the space of pseudocharacters is also one dimensional. Then Part b, case 2 of Theorem 4.22 of \cite{ABO} says that there is a unique orientable lineal structure given by the nontrivial homomorphism $h:G \to \Z$ considered as an action on $\R$. The generating set corresponding to this action is $A\cup\{t,t\inv\},$ where $A = \ker(h)$ and $t \in h\inv(1)$.

    Now let $W$ be any (possibly non-orientable) lineal structure. We claim that $A$ acts elliptically (that is, with bounded orbits) on $W$. If not, there is some hyperbolic element $g \in A$. Let $H \subset G$ be the subgroup with index at most two which stabilizes each boundary point and let $h':H \to \R$ be the corresponding Busemann character. As a finite index subgroup, $H\subset G$ also acts geometrically on $X\bowtie Y$, so $\dim(\Hom(H,\R)) = 1$. Then there is some $\lambda \neq 0$ so that, $\lambda h|_H = h'$. But then $h'(g^2) = \lambda h(g^2) = 0$, so $g$ is not hyperbolic. This shows that $A$ contains no hyperbolic elements, and therefore acts elliptically.

    Let $S_W$ be a generating set given by the Milnor--Schwartz Lemma (\cref{lem:Milnor-Schwartz}) so that the orbit map $\Cay(G,S_W) \to W$ is a quasi-isometry. Let $t \in h\inv(1)$. Then clearly $A \cup\{t,t\inv\}$ generate $G$, and since $A$ acts elliptically, $$\sup_{g \in A \cup\{t,t\inv\}} |g|_{S_W} < \infty.$$ Then the identity map $\Cay(G, A \cup \{t,t\inv\}) \to \Cay(G, S_W)$ is Lipschitz, so the lineal structure $[A \cup \{t,t\inv\}]$ dominates the action on $W$. By Theorem 4.22 of \cite{ABO}, two distinct lineal structures are incomparable, so we conclude that they are equal. Then $G$ has a unique lineal structure.

\end{proof}

\subsection{Virtually cyclic abelianization.}

In this subsection, we will prove \cref{prop:virt_ab}, which says that a group acting geometrically and positively on a horocyclic product has virtually cyclic abelianization. The proof has three broad steps: 

\begin{enumerate}
    \item Invoke \cite{CaplingerLevitin}: Use Theorem B of \cite{CaplingerLevitin} to assume that $X$ and $Y$ are Millefeuille spaces, and that the action on $X\bowtie Y$ is by product maps. Since $X$ and $Y$ are Millefeuille spaces, we also obtain an action on a Diestel-Leader graph $\DL(n,m)$. If $v \in \DL(n,m)$ is any vertex, then $\Stab_G(v)$ acts geometrically on $\pi\inv(v)$, a simply connected nilpotent Lie group.
    \item Assume for the sake of contradiction that $G\ab$ is not virtually $\Z$. Then there is a surjection $k:G\to \Z$ linearly independent from $h$. By replacing $k$ with a linear combination of $k$ and $h$, we may assume that there is an element $g \in \ker(h)$ with $k(g)\neq 0$. Taking powers of $g$ shows that for every $v \in \DL(n,m)$, $\Stab_G(v)$ contains an element not in $\ker(k)$.
    \item Let $t \in G$ be hyperbolic and let $v \in \Axis_{\DL(n,m)}(t)$. Then $t$ commensurates $\Stab_G(v)$, and moreover acts like an Anosov map (in a sense to be made precise). We then use Malcev rigidity to show that $k(tgt\inv) = k(g)$ implies $k$ is trivial\footnote{Compare to the elementary linear algebra fact that if $\alpha:\R^n\to \R^n$ has eigenvalues off the unit circle, and $K:\R^n \to \R$ satisfies $K\circ \alpha = K$, then $K$ is trivial.} on $\Stab_G(v)$, contradicting the previous paragraph.

\end{enumerate}

\noindent \textbf{Step 1.} Let $G$ act geometrically and positively on $X\bowtie Y$. Theorem B of \cite{CaplingerLevitin} gives millefeuille spaces $Z,W$ quasi-isometric to $X$ and $Y$ respectively, and actions of $G$ on $Z$ and $W$ so that the induced product action on $Z\bowtie W$ is geometric. We proceed assuming that $X$ and $Y$ are millefeuille spaces and that $G$ acts by products of isometries. To fix notation, let $N_1,N_2$ be simply connected nilpotent groups, $\alpha_i: \R \to \Aut(N_i)$ be expanding one parameter families of automorphisms of $N_i$ and $n,m \in \N$ so that $X = (N_1\rtimes_{\alpha_1} \R)[n]$ and $Y = (N_2 \rtimes_{\alpha_2} \R)[m]$. We now explicitly compute the group of product isometries of $X\bowtie Y$.

\begin{lemma}
\label{lem:heintzeIsom}
    Let $N$ be a simply connected nilpotent Lie group and $\alpha_t \in \Aut(N)$ a one parameter family of automorphisms. Give $N$ a left-invariant Riemannian metric and give $N\rtimes_\alpha \R$ the left-invariant Riemannian metric which restricts to the given one on $N$ and makes the $\R$ factor orthogonal to $N$. Let $\Isom(N\rtimes_\alpha \R)_\infty$ denote the isometries which induce a translation on the coset partition of $N$. Let $\AutIsom(N)$ denote the automorphisms of $N$ which are also isometries. Then $$\Isom(N\rtimes_\alpha \R)_\infty = \left( N\rtimes \bigcap_{s\in \R} \alpha_s\AutIsom(N) \alpha_s\inv  \right)\rtimes \R,$$ where the action of $\bigcap_{s\in \R} \alpha_s\AutIsom(N) \alpha_s\inv$ on $N$ is the obvious one and the action of $\R$ is by $s\cdot (n, A) = (\alpha_s(n),\alpha_s A \alpha_s\inv)$. 
    
\end{lemma}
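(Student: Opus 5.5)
The plan is to use the fact that the Heintze group $G_0 = N\rtimes_\alpha \R$ acts simply transitively on itself by left multiplication, and that these left translations lie in $\Isom(N\rtimes_\alpha\R)_\infty$: a left translation by $(m,s)$ sends the coset $N\times\{t\}$ to $N\times\{t+s\}$. Hence
$$\Isom(N\rtimes_\alpha \R)_\infty = G_0\cdot K,$$
where $K$ is the stabilizer of the identity $e=(1,0)$ in $\Isom(N\rtimes_\alpha\R)_\infty$. Since $G_0 \cap K = 1$, the whole statement reduces to two things: showing $K = \bigcap_{s} \alpha_s \AutIsom(N)\alpha_s\inv$, acting by $A\cdot(n,t) = (A(n),t)$, and then checking the conjugation formulas.

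\textbf{Step 1 (description of $K$).} Let $\varphi\in K$. Because $\varphi$ translates the coset partition and fixes $e$, it preserves each coset $N\times\{t\}$. These cosets are exactly the horospheres of the Busemann function $\beta(n,t)=-t$. Hence $\varphi$ fixes $\infty$ and maps vertical geodesics to vertical geodesics. With the orthogonal metric, these vertical geodesics are the curves $t\mapsto (n,t)$ parametrized by arclength. Since $\varphi$ preserves heights, it follows that
$$\varphi(n,t) = (\varphi_0(n),t),$$
where $\varphi_0 = \varphi|_{N\times\{0\}}$.

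\textbf{Step 2 ($\varphi_0$ is an automorphism).} The Riemannian metric induced on $N\times\{0\}$ is the given left-invariant metric on $N$, and $\varphi_0$ is a Riemannian isometry of it fixing $1$. By Wolf's theorem, the isometry group of a simply connected nilpotent Lie group with a left-invariant metric is $N\rtimes\AutIsom(N)$. Therefore $\varphi_0\in\AutIsom(N)$.

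\textbf{Step 3 (isometry on every level).} The left-invariant metric on $N\rtimes_\alpha\R$ is an orthogonal sum $dt^2 + g_t$. Here $g_t$ is the metric on the level $N\times\{t\}$, and it equals the given metric transported by $\alpha_t$ (up to the sign convention on $t$). A map of the form $A\times\id$ is an isometry of $dt^2+g_t$ if and only if $A$ is an isometry of every $g_t$. Being an isometry of $g_t$ means $\alpha_t\inv A\alpha_t\in\AutIsom(N)$, that is, $A \in \alpha_t\AutIsom(N)\alpha_t\inv$. The forward direction of this equivalence gives $\varphi_0$ in the intersection. The converse (that such maps are isometries) follows directly from the orthogonal sum formula for the metric. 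This identifies $K$ as claimed.

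\textbf{Step 4 (assembling).} The group $N$ is normalized by $K$, since $A(m,0)A\inv = (A(m),0)$, so $N\rtimes K$ is a group. It is normal in the whole group because it is the subgroup acting trivially on the coset partition. Conjugation by $(1,s)$ sends $(n,A)$ to $(\alpha_s(n),\alpha_sA\alpha_s\inv)$, which is a direct computation in $N\rtimes\R$. This yields the stated semidirect product $\left(N\rtimes \bigcap_s \alpha_s\AutIsom(N)\alpha_s\inv\right)\rtimes\R$.

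I expect the main obstacle to be Step 2: it requires knowing that the relevant metric on the horosphere is the Riemannian one, and then invoking Wolf's rigidity theorem for isometries of nilpotent Lie groups. The remaining steps are bookkeeping with the orthogonal decomposition of the metric.
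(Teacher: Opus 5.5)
Your proposal is correct and follows essentially the same route as the paper: Wolf's description $\Isom(N)=N\rtimes\AutIsom(N)$, the product form $(n,t)\mapsto(A(n),t)$ of height-preserving isometries, the level-wise criterion $\alpha_s^{-1}A\alpha_s\in\AutIsom(N)$ for all $s$, and conjugation by $L_{(1,s)}$ to obtain the $\R$-action. The differences are cosmetic: you organize via the isotropy group of $e$ rather than the kernel of the height map, and you explicitly justify the product form, which the paper leaves implicit (for a non-expanding $\alpha$, phrase Step 1 as ``$\varphi$ preserves $t$, hence its gradient $\partial_t$'' rather than via horospheres and $\infty$); you also get both directions of the criterion at once from the decomposition $dt^2+g_t$, instead of the paper's orthonormal-frame computation plus conjugation.
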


\begin{proof}

    Let $h:\Isom(N\rtimes_\alpha \R) \to \R$ denote the induced action on heights. Since left translation by $(1,s)$ is an isometry, this height function splits. The kernel of $h$ consists of all isometries of $N$ which extend to isometries of $N\rtimes_\alpha \R$. By \cite{Isom_Nilpotent}, $\Isom(N) = N \rtimes \AutIsom(N)$. Clearly left translation in $N$ extends to an isometry of $N\rtimes_\alpha \R$. For $A \in \AutIsom(N)$, we show that $F_A:N\rtimes_\alpha \R \to N\rtimes_\alpha \R$ given by $(n,s) \to (A(n),s)$ is an isometry of $N\rtimes_\alpha \R$ if and only if $\alpha_s A \alpha_s\inv \in \AutIsom(N)$ for all $s$.

    Let $v_1,\ldots v_n$ be an orthonormal basis of the Lie algebra $\Lie(N)$, and let $\partial_t$ be the tangent vector at $(1,0) \in N\rtimes \R$ in the $\R$ direction. Let $(n,s) \in N\rtimes_\alpha \R$ and let $L_{(n,s)}$ denote left multiplication by $(n,s)$. Then the vectors $d(L_{(n,s)})_e(v_i)$ and $\partial_t$ form an orthonormal basis of $T_{(n,s)}(N\rtimes_\alpha \R)$. Assume that $\alpha_sA \alpha_s\inv \in \AutIsom(N)$ for all $s \in \R$, and set $B_s = \alpha_s\inv A \alpha_s $. A straightforward computation shows that $$F_A \circ L_{(1,s)} = L_{(1,s)} \circ F_{B_s} \quad \text{and} \quad F_A\circ L_{(n,0)} = L_{(A(n),0)} \circ F_A.$$ Together, these imply $F_A\circ L_{(n,s)} = L_{(A(n),s)}\circ F_{B_s}$, so that $$dF_A\circ dL_{(n,s)}(v_i) = dL_{(A(n),s)}\circ dF_{B_s}(v_i).$$ Since $B_s \in \Isom(N)$, the vectors $dL_{(A(n),s)}\circ dF_{B_s}(v_i)$ form an orthonormal basis of the horizontal subspace at $(A(n),s)$. Since $dF_A(\partial_t) = \partial_t$ also, we conclude that $F_A$ is an isometry.

    Conversely, let $n \in N$ and $s \in \R$ and compute $$L_{(1,s)}(L_{(n,0)} F_A) L_{(1,s)}\inv = L_{(\alpha_s(n),0)}(L_{(1,s)} F_A L_{(1,s)}\inv) = L_{(\alpha_s(n),0)} F_{\alpha_s A\alpha_s\inv}.$$ In particular, if $F_A$ is an isometry, then $F_{\alpha_s A\alpha_s\inv}$ is also. This shows that the kernel of $h$ is indeed $N\rtimes \bigcap_{s\in \R}\alpha_s \AutIsom(N) \alpha_s\inv$. The above computation also shows that the action of $\R$ on $\ker(h)$ is $s\cdot (n,A) = (\alpha_s(n), \alpha_s A\alpha_s\inv)$, proving the lemma.
    
\end{proof}

We can now give a precise description of the group of product isometries of $X\bowtie Y = (N_1\rtimes_{\alpha_1}\R)[n] \bowtie (N_2\rtimes_{\alpha_2} \R)[m].$

\begin{lemma}
\label{lem:bowtieMillefeuilleIsom}

    The group of product isometries of $(N_1\rtimes_{\alpha_1} \R)[n] \bowtie (N_2\rtimes_{\alpha_2} \R)[m]$ is $$(H_1 \times_h \Aut(T_n)) \times_{-h} (H_2 \times_h\Aut(T_m)),$$ where $$H_i = \left( N_i\rtimes \bigcap_{s\in \R} (\alpha_i)_s\AutIsom(N) (\alpha_i)_s\inv  \right)\rtimes \R,$$ and $\Aut(T_k)$ denotes the automorphisms of the regular $(k+1)$-valent tree which fix a distinguished point at infinity. The notation $\times_h$ denotes the fibered product along height functions and $\times_{-h}$ denotes the the fibered product of the height function on one group and the negative of the height function on the other.

\end{lemma}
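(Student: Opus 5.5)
The plan has two stages. First we identify the isometries of a single millefeuille space $(N\rtimes_\alpha \R)[k]$ that fix its distinguished point at infinity. Then we take the fibered product over heights.

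\textbf{Step 1: height-respecting isometries of $X[k]$ split.} Write $X[k]\subset X\times T_k$ with the defining condition $\beta_X=\beta_T$, where $X=N\rtimes_\alpha\R$. A pair $(f,\phi)\in \Isom(X)_\infty\times \Aut(T_k)$ with $h_X(f)=h_T(\phi)$ preserves this condition. Because the metric on $X[k]$ is built from the two coordinate metrics (compare the length-metric description used for horocyclic products), such a pair restricts to an isometry of $X[k]$. This gives an injection $H\times_h \Aut(T_k)\to \Isom(X[k])_\infty$, with $H=\Isom(X)_\infty$ computed in \cref{lem:heintzeIsom}.

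For surjectivity, take $F\in\Isom(X[k])_\infty$. We show $F$ preserves the two projections.
\begin{enumerate}
\item The fibers of $X[k]\to T_k$ over points of $T_k$ are horospheres $N\times\{s\}$ of $X$. Over an open edge, the union of fibers is a copy of a height-slab of $X$.
\item The branching locus consists of the fibers over vertex heights. This locus is metrically characterized: it is exactly where $X[k]$ fails to be locally a manifold, at least when $k>2$.
\item So $F$ permutes the slabs, and therefore induces a tree automorphism $\phi$ fixing the end.
\item Restricted to each slab, $F$ is an isometry between open subsets of $X$ commuting with heights. By analyticity of the Riemannian metric, or by gluing along the shared horospheres, $F$ extends to a global isometry $f$ of $X$.
\end{enumerate}
The degenerate cases, where $N$ is trivial or $k=2$, are immediate.

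\textbf{Step 2: product isometries of the horocyclic product.} By definition, a product isometry of $X\bowtie Y$ is a pair $(F_1,F_2)$ with $F_1\in\Isom(X)_\infty$, $F_2\in\Isom(Y)_\infty$ and $h(F_1)=-h(F_2)$. This is the fibered product $\Isom(X)_\infty\times_{-h}\Isom(Y)_\infty$ described in \cref{subsec:ThmB}. Substituting the result of Step 1 for each factor gives the stated formula. The height functions match because the height of $(f,\phi)$ on $X[k]$ equals $h_X(f)=h_T(\phi)$.

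\textbf{Main obstacle.} The hard part is the surjectivity in Step 1, namely that every isometry of a mixed millefeuille space fixing the end is a genuine product. I would first try to use the fact that the singular set is intrinsic, which makes the tree coordinate intrinsic. I would then use uniqueness of isometric extension across horospheres in the analytic Riemannian manifold $X$ to glue the slab isometries consistently into a single $f$. As a fallback, if the intended meaning of ``product isometries'' already presupposes the product form, the lemma reduces directly to \cref{lem:heintzeIsom} together with the fibered-product bookkeeping of Step 2.
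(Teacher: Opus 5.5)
Your proposal is correct and follows the paper's proof, which combines \cref{lem:heintzeIsom} with the splitting $\Isom(M[k])\cong \Isom(M)_\infty\times_h\Aut(T_k)$ and then does the fibered-product bookkeeping; the only difference is that the paper cites this splitting as Lemma 7.2.1 of \cite{CaplingerLevitin}, whereas your Step 1 sketches a proof of it (the branching locus is intrinsic, and slab isometries that agree on a shared vertex fiber and shift height by the same constant coincide, so they glue to a single $f$). Your fallback, however, does not let you skip Step 1: ``product'' refers only to the $X\bowtie Y$ splitting, so identifying $\Isom((N_i\rtimes_{\alpha_i}\R)[k])_\infty$ with $H_i\times_h\Aut(T_k)$ still requires the millefeuille splitting.
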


\begin{proof}
    This lemma is a direct combination of \cref{lem:heintzeIsom} and \cite[Lemma 7.2.1.]{CaplingerLevitin}, which says that $\Isom(M[k])\cong \Isom(M)_\infty \times_{h} \Aut(T_k)$.
\end{proof}

In particular, the group of product isometries projects to $\Aut(T_n)\times_{-h} \Aut(T_m)$, and therefore acts on the Diestel-Leader graph $\DL(n,m) = T_n \bowtie T_m$, making the projection $\pi: X\bowtie Y \to \DL(n,m)$ equivariant. The following Lemma is a generalization of \cite[7.2.3]{CaplingerLevitin}.

\begin{lemma}
\label{lem:StabGeometric}
    Let $v_0 \in \DL(n,m)$ and assume that $G$ acts geometrically by product isometries on $X\bowtie Y$. Then $\Stab_G(v_0)$ acts geometrically on $\pi\inv(v_0) \cong N_1 \times N_2$.
\end{lemma}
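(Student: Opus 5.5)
We have to show two things: that $\Stab_G(v_0)$ acts properly on the fiber $F=\pi\inv(v_0)$, and that it acts coboundedly on it. First we identify the fiber. A vertex $v_0=(a,b)\in \DL(n,m)=T_n\bowtie T_m$ has a fixed height $s$. Hence $\pi\inv(v_0)$ consists of the pairs $(x,y)$ whose $X$-coordinate lies in the horosphere sheet of $(N_1\rtimes\R)[n]$ over $a$ and whose $Y$-coordinate lies in the sheet over $b$. These sheets are copies of $N_1\times\{s\}$ and $N_2\times\{-s\}$, so $F\cong N_1\times N_2$. Because $G$ acts by product isometries and $\pi$ is equivariant, $\Stab_G(v_0)$ preserves $F$.

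For \emph{properness} we use that $F$ carries the subspace metric coming from $X\bowtie Y$, or at least a metric whose bounded sets are bounded in $X\bowtie Y$. Take a bounded set $B\subset F$. It is also bounded in $X\bowtie Y$, so the coarse stabilizer of $B$ in $\Stab_G(v_0)$ sits inside the coarse stabilizer in $G$, which is finite. Bounded sets in the intrinsic Riemannian metric on $N_1\times N_2$ are bounded in the ambient metric because the inclusion is 1-Lipschitz on paths. This step is routine.

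The main step is \emph{coboundedness}. Fix a compact $K\subset X\bowtie Y$ whose $G$-translates cover, and pick $p\in F$. Given $q\in F$, choose $g\in G$ with $d(g\cdot p,q)\le D$. Two issues arise.

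First, $g$ need not fix $v_0$. However $\pi$ is coarsely Lipschitz, since it is a restriction of the coordinate projections to the tree factors. So $\pi(g\cdot p)=g\cdot v_0$ lies within bounded distance $D'$ of $v_0$ in $\DL(n,m)$. Only finitely many vertices lie in the $D'$-ball around $v_0$ (the graph is locally finite). For each such vertex $w$ in the orbit $G\cdot v_0$, fix once and for all some $g_w$ with $g_w\cdot v_0=w$. Then $g_w\inv g$ fixes $v_0$ up to the choice of $g_w$. We must also allow for the case where $g\cdot v_0$ ranges over the finitely many $G$-orbits of vertices near $v_0$; this is handled by enlarging the finite list.

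Second, we have to control $d(g_w\inv g\cdot p,q)$, and this is where I expect the obstacle. The element $g_w$ does not fix $q$, so we instead write $g = g_w\,(g_w\inv g)$ with $k=g_w\inv g\in\Stab_G(v_0)$. Then $k\cdot p=g_w\inv g\cdot p$ lies in $F$ and is within $D$ of $g_w\inv\cdot q$. The difficulty is that $g_w\inv\cdot q$ lies in $g_w\inv F$, not in $F$. To fix this, I would instead choose the covering element more cleverly. For each point $q\in F$ and each nearby vertex $w$, pick a point $q'\in\pi\inv(w)$ near $q$; this is possible because the fibers over adjacent vertices are at uniformly bounded Hausdorff distance near common points, via vertical edges of bounded length.

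Then I would argue by a uniform version: the set $\{k\in\Stab_G(v_0)\}$ translates a fixed compact piece $\pi\inv(v_0)\cap N_R(G_{\mathrm{fin}}K)$ to cover $F$. Equivalently, I would show that $F/\Stab_G(v_0)$ is compact. Suppose a sequence $q_i\in F$ escaped every compact set modulo $\Stab_G(v_0)$. Choose $g_i$ with $g_i\cdot q_i\in K$. Passing to a subsequence, $g_i\cdot v_0$ is constant, equal to $w$, since $\pi(K)$ is finite. Then $g_1\inv g_i\in\Stab_G(v_0)$ moves $q_i$ into the compact set $g_1\inv K\cap F$, which is a contradiction. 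This subsequence argument, using finiteness of $\pi(K)$, is the clean way through, and it also explains why coboundedness is the crux.
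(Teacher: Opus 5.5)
Your final subsequence argument is correct and is essentially the paper's proof written as a contradiction. The paper directly takes $C=\bigl(\bigcup_{w} g_w\cdot K\bigr)\cap\pi\inv(v_0)$, where $w$ ranges over the finitely many vertices in $G\cdot v_0\cap\pi(K)$ and $g_w\cdot w=v_0$; this uses the same key point as your argument (that $g\cdot q\in K$ forces $g\cdot v_0$ into a finite set), and properness is handled just as you do.

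Two small corrections: discard the exploratory middle paragraphs (choosing $g$ with $d(g\cdot p,q)\le D$, and the Hausdorff-distance fix), since they run in the wrong direction; and note that it is the set of vertices in $\pi(K)$, not $\pi(K)$ itself, that is finite.
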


\begin{proof}
    The proper discontinuity of the action of $\Stab_G(v_0)$ on $\pi\inv(v_0)$ follows directly from the proper discontinuity of $G$ on $X\bowtie Y$. It remains to show that the action is cocompact.

    Let $K \subset X\bowtie Y$ be a compact set whose $G$-translates cover $X\bowtie Y$. Then $\pi(K)$ is compact and therefore contains finitely many vertices. For each of the finitely many vertices $w \in G\cdot v_0 \cap \pi(K)$, choose a $g_w \in G$ so that $g_w \cdot w = v_0$. Set $$C = \left(\bigcup_{w \in G\cdot v_0 \cap \pi(K)} g_w \cdot K \right) \cap \pi\inv(v_0).$$ This is clearly compact. Let $x \in \pi\inv(v_0).$ Then there is some $g \in G$ so that $g\cdot x \in K$. Let $w = \pi(g\cdot x)$. Then $g_wg \cdot x \in C$, and we conclude that the $G$-translates of $C$ cover $\pi\inv(v_0)$.
\end{proof}

Let $N = N_1\times N_2$ and $\alpha(t) = (\alpha_1(t),\alpha_2(-t))$ and let $p: G \to \Isom(N \rtimes_ \alpha \R)$ denote the projection to $H_1 \times_{-h} H_2 \subset \Isom(N \rtimes_ \alpha \R)$ given by \cref{lem:bowtieMillefeuilleIsom}. Lemma \ref{lem:StabGeometric} says that for all vertices $v \in \DL(n,m)$, the stabilizer $\Stab_G(v)$ acts geometrically on $\pi\inv(v)$. Then $p(\Stab_G(v))$ is a discrete an cocompact subgroup of $N \rtimes \AutIsom(N)$. A theorem of Auslander \cite{Auslander} now implies that $\Gamma = N \cap p(\Stab_G(v))\subset N \rtimes \AutIsom(N)$ is a lattice in $N$ and a finite index subgroup of $p(\Stab_G(v))$.

We note also that the action on $\DL(n,m)$ gives a height-change homomorphism $h:G\to \Z$. Fix an element $t \in h\inv([1,\infty))$.

\vspace{.3cm}

\noindent \textbf{Step 2.} Assume for the sake of contradiction that $G\ab$ is not virtually $\Z$. Then there is some surjection $k':G \to \Z$ which is linearly independent from $h$. Fix an element $t \in h\inv([1,\infty))$ and set $k = h(t)k' - k'(t) h$, a non-trivial homomorphism satisfying $k(t) = 0$. If $k(\ker(h)) = 0$ also, then $k$ would be trivial, so there is an element $g \in \ker(h)$ satisfying $k(g) \neq 0$. Every elliptic element of $\Aut(T_n)$ and $\Aut(T_m)$ fixes a vertex in each tree, so some power of $g$ fixes a vertex in $\DL(n,m)$. By taking further powers, we can produce elements fixing arbitrarially large balls in $\DL(n,m)$. In particular, for each $v \in \DL(n,m)$ there is an element $g^N \in \Stab_G(v)$ not in $\ker(k)$. Then $k$ restricted to each vertex stabilizer is non-trivial. This last statement is what we will contradict in step 3.

If $X$ and $Y$ are both trees, then no element in a vertex stabilizer can have infinite order, since then the action would fail to be properly discontinuous. In that case, we have already contradicted the previous paragraph. We proceed assuming that at least one of $N_1, N_2$ is non-trivial.

\vspace{.3cm}

\noindent \textbf{Step 3.} By Lemma \ref{lem:bowtieMillefeuilleIsom}, we know that $p(t)$ takes the form $L_{(n,0)}  F_A  L_{(1,s)}$, where $n \in N = N_1\times N_2$ and $A \in \AutIsom(N_1)\times \AutIsom(N_2) \subset \AutIsom(N)$ is an isometric automorphism which satisfies $\alpha_r A \alpha_r\inv \in \AutIsom(N)$ for all $r \in \R$. By conjugating the entire action, we may assume that the axis of $t$ passes through the identity element of $N\cong \pi\inv(v) \subset X\bowtie Y$, and therefore that $n = 1$.

Choose a vertex $v \in \Axis_{\DL(n,m)}(t)$. From Step 1, $p(\Stab_G(v)) \subset N\rtimes \AutIsom(N)$ contains a finite index subgroup $\Gamma = N \cap p(\Stab_G(v))$ which is a lattice in $N$. There is a further finite index subgroup $\Lambda \subset \Gamma$ so that $p|_{\Stab_G(v)}\inv (\Lambda)$ fixes the $h(t)$-ball in $\DL(n,m)$ centered at $v$. Then $t p|_{\Stab_G(v)}\inv(\Lambda) t\inv \subset \Stab_G(v)$. 

We now compute the action of conjugation by $p(t)$ on $N \subset N\rtimes \R$: $$(F_AL_{(1,s)}) L_{(m,0)} (F_AL_{(1,s)})\inv = F_A L_{(\alpha_s(m),0)} F_A\inv = L_{(A\alpha_s(m))}.$$ In particular, $p(t) \Lambda p(t)\inv \subset \Gamma$. Since the actions of $A$ and $\alpha_s$ split as diagonal actions on $N_1$ and $N_2$, the action of $\alpha_s$ is uniformly expanding on one and uniformly contracting on the other, and $A$ is an isometry, we conclude that the action of conjugation by $p(t)$ on the Lie algebra $\Lie(N)$ has no eigenvalues on the unit circle. 

To summarize: there are lattices $\Lambda \subset \Gamma \subset N = N_1\times N_2$ so that $p|_{\Stab_G(v)}\inv (\Lambda) \subset \Stab_G(v)$ has finite index, and the action of $p(t)$ by conjugation sends $\Lambda$ into $\Gamma$ and acts on $\Lie(N)$ with eigenvalues off the unit circle. Since $\ker(p|_{\Stab_G(v)})$ is finite (by \cref{lem:StabGeometric}), $k:\Stab_G(v) \to \Z$ descends to $p(\Stab_G(v))$, and thereby gives a map on the lattices. 

We now show that the above conditions force $k$ to be trivial.

\begin{proposition}
    \label{prop:k_vanishes}
    Let $N$ be a simply connected nilpotent Lie group and let $\Lambda \subset \Gamma \subset N$ be lattices. Let $\beta \in \Aut(N) = \Aut(\Lie(N))$ be an automorphism with eigenvalues off the unit circle and satisfying $\beta(\Lambda) \subset \Gamma$. Let $k:\Gamma \to \Z$ be a homomorphism satisfying $k(\beta (\lambda)) = k (\lambda)$ for all $\lambda \in \Lambda$. Then $k$ is trivial.  
\end{proposition}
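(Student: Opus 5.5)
Extend $k$ to the Lie group, then use the linear-algebra fact from the footnote: an invariant linear functional of a map with no eigenvalues on the unit circle must vanish.

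\textbf{Step 1: extend $k$ to a Lie algebra homomorphism.} Since $\Gamma$ is a lattice in the simply connected nilpotent group $N$, Malcev rigidity says every homomorphism from $\Gamma$ to a simply connected nilpotent Lie group extends uniquely to a continuous homomorphism from $N$. Viewing $k:\Gamma\to\Z\subset\R$, we get a unique continuous homomorphism $K:N\to\R$ with $K|_\Gamma = k$. Such a $K$ is the same as a linear functional $\kappa:\Lie(N)\to\R$ vanishing on $[\Lie(N),\Lie(N)]$, via $K = \kappa\circ\log$.

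\textbf{Step 2: transfer the invariance to $N$.} Consider the two continuous homomorphisms $K\circ\beta$ and $K$ from $N$ to $\R$. By hypothesis they agree on $\Lambda$, since $\beta(\Lambda)\subset\Gamma$ gives $K(\beta(\lambda)) = k(\beta(\lambda)) = k(\lambda) = K(\lambda)$. Because $\Lambda$ is itself a lattice in $N$, the uniqueness part of Malcev rigidity, applied now to $\Lambda$, forces $K\circ\beta = K$ on all of $N$. The point to be careful about is that uniqueness must be used for $\Lambda$, not for $\Gamma$.

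\textbf{Step 3: linear algebra.} On the Lie algebra, Step 2 reads $\kappa\circ d\beta = \kappa$, where $d\beta$ is the automorphism of $\Lie(N)$ identified with $\beta$. Moreover $d\beta$ preserves $\mathfrak{n}' := [\Lie(N),\Lie(N)]$, so it descends to a linear map $\bar\beta$ on $V = \Lie(N)/\mathfrak{n}'$. The eigenvalues of $\bar\beta$ are a subset of those of $d\beta$, hence lie off the unit circle. Since $\kappa$ factors through $V$ as $\bar\kappa$, we get $\bar\kappa\circ(\bar\beta - I) = 0$. As $1$ is not an eigenvalue of $\bar\beta$, the map $\bar\beta - I$ is invertible. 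Hence $\bar\kappa = 0$, so $K = 0$ and $k = K|_\Gamma = 0$.

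The only potential obstacle is justifying the Malcev extension of $k$ into $\R$ and its uniqueness on the sublattice $\Lambda$. One could instead use that $\Lambda$ is Zariski dense in $N$ (equivalently, its logarithms span $\Lie(N)$), so that continuous homomorphisms to $\R$ agreeing on $\Lambda$ agree everywhere. The rest is routine.
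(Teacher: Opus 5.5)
Your proof is correct and follows the paper's argument: extend $k$ via Malcev rigidity to $K:N\to\R$, use uniqueness to get $K\circ\beta=K$, and conclude from $dK\circ d\beta=dK$ that $d\beta$ would have eigenvalue $1$. The paper phrases that last step with a block-triangular matrix, while you use invertibility of $\bar\beta-I$. Your bookkeeping is also slightly cleaner: you extend from $\Gamma$ and use uniqueness on $\Lambda$, whereas the paper extends from $\Lambda$ and leaves implicit that $K$ agrees with $k$ on $\beta(\Lambda)\subset\Gamma$.
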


This proposition completes the proof of \cref{prop:virt_ab} (with $\beta = A \alpha_s$), since it implies that $k$ is trivial on $\Stab_G(v)$, contradicting the conclusion of step 2.

\begin{proof}

    By Malcev rigidity (see \cite[Theorem 2.11]{Raghunathan}), the map $k:\Lambda \to \Z$ extends uniquely to a map $K: N \to \R$. Uniqueness and the identity $k\circ \beta = k$ together imply that $K\circ \beta = K$ and therefore that $dK \circ d\beta = dK$. In particular, $d\beta$ leaves every coset of $\ker (dK)$ invariant. Assume that $K$ is non-trivial, so that there is some $v \in \Lie(N)$ satisfying $dK(v) = 1$. Choose a basis $w_1,\ldots, w_k$ of $\ker(dK)$. Then in the basis $w_1,\ldots,w_k, v$ the matrix of $d\beta$ is $$\begin{pmatrix}
  \quad \Asterisk  \quad & \Asterisk \\[1.5ex]
  \mathbf{0} & 1
\end{pmatrix}$$ and hence has 1 as an eigenvalue. But $d\beta$ is assumed to have no eigenvalues on the unit circle. 
    
\end{proof}

This concludes the proof of Part 1 of \cref{thm:main}. We now prove Part 2. This proof is nearly identical to the proof of the corresponding statement in \cite[Proposition 4.1]{Valuations}.

\begin{proof}[Proof of Part 2, \cref{thm:main}]

    Let $G$ act geometrically on $X\bowtie Y$, with our standing assumptions. Since $G$ is amenable---see \cite[Section 6]{CaplingerLevitin}---it admits no hyperbolic actions of general type. Then every hyperbolic structure is either elliptic, lineal or quasi-parabolic. By Part 1 of \cref{thm:main}, there is a unique lineal structure given by some homomorphism $h:G\to \Z$. Let $t \in G$ be any element so that $h (t) > 0$. Since $h$ is the unique nontrivial projective class of homomorphism $G \to \Z$ and Busemann characters detect hyperbolicity (See \cref{prop:ccmt_Busemann_function}), $t$ acts hyperbolically on every quasi-parabolic structure. By definition, every quasi-parabolic structure $Z$ has a unique global fixed point on $\partial Z$, which is either the attracting or repelling fixed point of $t$. Let $\mathcal{P}_+(G)$ denote the hyperbolic structures which fix the attracting fixed point of $t$, and let $\mathcal{P}_-(G)$ denote the hyperbolic structures which fix the repelling fixed point of $t$. Then $\mathcal{H}(G) = \{\ast, L\} \cup \mathcal{P}_+(G), \mathcal{P}_-(G)$. 

    If any element $Z \in \mathcal{P}_+(G)$ dominates $W \in \mathcal{P}_-(G)$, then $G$ fixes both the attracting and repelling fixed points of $t$ on $\partial W$. Then $W$ is lineal, hence equivalent to $L$. This shows that $\mathcal{P}_+(G) \cap \mathcal{P}_-(G) = \{L\}$, and that elements of $\mathcal{P}_+(G) \setminus \{L\}$ are incomparable to elements of $\mathcal{P}_+(G) \setminus \{L\}$.

\end{proof}

\section{Maximum elements.}

In this section, we will prove Part 3 of \cref{thm:main}, that the subposets $\mathcal{P}_{\pm}(G)$ have maximum elements. As in Section \ref{sec:virt_Cyclic}, we use \cite{CaplingerLevitin} to assume that $X$ and $Y$ are millefeuille spaces and that the $G$-action is by product maps. These maximum elements of $\mathcal{P}_{\pm}(G)$ will be represented by the actions on $X$ and $Y$. 

The proof requires a new boundary object associated to a horocyclic product, which we call the \textit{vertical boundary}.

\begin{definition}
    The \textit{vertical boundary} of a horocyclic product, denoted $\mathcal{V}(X\bowtie Y)$ is the space of all height-parameterized vertical geodesics with the compact-open topology.
\end{definition}

The vertical boundary is distinct from the visual boundary studied by Ferragut and from the upper and lower boundaries of a horocyclic product. Levitin and the author showed that the vertical boundary is in bijection with the product of the parabolic boundaries $\partial_\infty X \times \partial_\infty Y,$ see the discussion following Definition 2.4.2 in \cite{CaplingerLevitin}.

We now outline the proof of Part 3 of \cref{thm:main}.

\begin{enumerate}
    \item Verify that the bijection $\mathcal{V} = \mathcal{V}(X\bowtie Y) \cong \partial_\infty X \times \partial_\infty Y$ is in fact a homeomorphism.
    \item Let $Z \in \mathcal{P}_+(G)$ dominate $X$, and let $\pi_Z:X\bowtie Y \to Z$ be a dominating map. We show that $\pi_Z$ induces a continuous, $G$-equivariant map $\partial\pi_Z: \mathcal{V} \to \partial_\infty Z$ by sending a vertical geodesic $\gamma$ to the quasi-geodesic $\pi_Z(\gamma)$.
    \item Hyperbolic elements of $G$ act on $\mathcal{V} \cong \partial_\infty X \times \partial_\infty Y$ diagonally by expanding one of $\partial_\infty X$ and $\partial_\infty Y$ and contracting the other. Since $Z \in \mathcal{P}_+(G)$, a particular hyperbolic element $g\in G$ expands or contracts $\partial_\infty Z$ according to whether it expands or contracts $\partial_\infty X$. We use this fact to show that $\partial \pi_Z: \mathcal{V} \cong \partial_\infty X \times \partial_\infty Y \to \partial_\infty Z$ factors through the projection onto $\partial_\infty X$.
    \item Use the previous point to construct a quasi-inverse to the coarsely equivariant map $Z \to X$. Since the poset $\mathcal{P}_+(G)$ has supremums (by Lemmas 4.3, 4.4 and 4.5 of \cite{Valuations}), this shows that $X$ is the maximum element. 
    
\end{enumerate}

\begin{lemma}
    Let $X$ and $Y$ be millefeuille spaces and let $\mathcal{V} = \mathcal{V}(X\bowtie Y)$ denote the space of height-parameterized vertical geodesics in $X\bowtie Y$ with the compact-open topology. Then $\mathcal{V}$ is homeomorphic to $\partial_\infty X \times \partial_\infty Y$. 
\end{lemma}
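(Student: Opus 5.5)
We define an explicit map $\Phi:\mathcal{V}\to \partial_\infty X\times\partial_\infty Y$ and show that it and its inverse are continuous. A height-parameterized vertical geodesic in $X\bowtie Y$ has the form $\gamma(s)=(\gamma_X(s),\gamma_Y(s))$. Here $\gamma_X$ is a vertical geodesic in $X$, parameterized so that $h_X(\gamma_X(s))=s$, and $\gamma_Y$ is a vertical geodesic in $Y$ with $h_Y(\gamma_Y(s))=-s$. We set $\Phi(\gamma)=(\gamma_X(-\infty),\gamma_Y(+\infty))$, the endpoints of the two factor geodesics other than $\infty_X,\infty_Y$. By the discussion following Definition 2.4.2 of \cite{CaplingerLevitin}, $\Phi$ is a bijection. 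Conversely, given $(\xi,\eta)$ there is a unique height-parameterized vertical geodesic $\gamma_\xi$ in $X$ from $\xi$ to $\infty_X$, and likewise $\gamma_\eta$ in $Y$; then $\Phi^{-1}(\xi,\eta)(s)=(\gamma_\xi(s),\gamma_\eta(-s))$. Uniqueness holds because millefeuille spaces are $\CAT(-\kappa)$, so any two points of $\partial X$ are joined by a unique geodesic up to reparameterization, and the height fixes the parameterization.

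Since the metric on $X\bowtie Y$ comes from an admissible monotone norm, the compact-open topology on height-parameterized paths in $X\bowtie Y$ agrees with the product of the compact-open topologies on the two coordinate paths. Locally uniform convergence in $X\bowtie Y$ is equivalent to locally uniform convergence of both coordinates, because $N$ is comparable to the $\ell^1$ norm. This reduces the lemma to a one-factor statement: the map $\xi\mapsto\gamma_\xi$, from $\partial_\infty X$ to height-parameterized vertical geodesics in $X$ with the compact-open topology, is a homeomorphism onto its image.

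For the one-factor statement, we use that in a proper $\CAT(-\kappa)$ space (indeed in any proper $\CAT(0)$ space with the cone topology on $\partial X$) bi-infinite geodesics depend continuously on their endpoints. Suppose $\xi_i\to\xi$ in $\partial_\infty X$, all distinct from $\infty_X$. By Arzelà--Ascoli and properness, any subsequence of $\gamma_{\xi_i}$ has a further subsequence converging uniformly on compacts to a geodesic $\gamma$. This uses that the $\gamma_{\xi_i}$ pass within bounded distance of a fixed point, which follows from $\CAT(-\kappa)$ thinness of ideal triangles with vertices $\xi_i,\xi,\infty_X$ once $\xi_i$ is close to $\xi$. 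The limit $\gamma$ has endpoints $\xi$ and $\infty_X$ and height parameterization, since heights converge by continuity of $\beta_X$. By uniqueness $\gamma=\gamma_\xi$, which gives continuity. Conversely, if $\gamma_{\xi_i}\to\gamma_\xi$ on compacts, then the points $\gamma_{\xi_i}(-T)$ are close to $\gamma_\xi(-T)$ for large $T$, and the rays beyond them fellow-travel in the $\CAT(-\kappa)$ sense. Hence the Gromov products $(\xi_i,\xi)_o$ tend to infinity, giving $\xi_i\to\xi$. An alternative route to continuity of the inverse is to note that the endpoint map is the limit of evaluation maps, but the fellow-traveling argument is more direct.

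The main obstacle is the bounded-distance claim in the first direction: a uniform bound on $d(o,\gamma_{\xi_i})$ for a basepoint $o$ on $\gamma_\xi$, needed so Arzelà--Ascoli applies. If $\xi_i\to\xi$ in the cone topology, the geodesic rays from $o$ to $\xi_i$ converge to the ray to $\xi$. Comparing the triangle $(o,\xi_i,\infty_X)$ with $\gamma_\xi$ and using $\delta$-thinness should then bound how far $\gamma_{\xi_i}$ strays from $o$. The height parameterization also needs care: limits of height-parameterized geodesics must remain height-parameterized, and this holds because $\beta_X$ is $1$-Lipschitz and hence continuous.
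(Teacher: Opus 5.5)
\textbf{Overall.} Your route is the paper's. You identify $\mathcal{V}$ with pairs of factor vertical geodesics, show that locally uniform convergence in $X\bowtie Y$ is equivalent to locally uniform convergence of both coordinates, and read off endpoints. Your third paragraph goes beyond what the paper writes. The paper tacitly treats ``$V_i\to V$ uniformly on compacts'' as the same as convergence of endpoints in $\partial_\infty X$. Your Arzel\`a--Ascoli argument, with the bound $(\xi_i,\infty_X)_o\le(\xi,\infty_X)_o+\delta$, fills that in. Two small points. First, say why sequences suffice, as the paper does: $\R$ is hemicompact, so $C(\R,X\bowtie Y)$ with the compact-open topology is metrizable, and $\partial_\infty X,\partial_\infty Y$ are metrizable. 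Second, your parenthetical about arbitrary proper $\CAT(0)$ spaces is false (in $\R^2$ two non-antipodal boundary points are joined by no line), though you never use it.

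\textbf{The gap: justifying the coordinate reduction.} The metric on $X\bowtie Y$ is not the restriction of the product metric $N(d_X,d_Y)$. It is the length metric obtained by taking infima only over paths that stay inside $X\bowtie Y$. Comparability of $N$ with $\ell^1$ (in fact admissibility alone) gives $d_{X\bowtie Y}\ge\frac12(d_X+d_Y)$. So the projections are Lipschitz and $\Phi$ is continuous. But continuity of $\Phi^{-1}$ needs an \emph{upper} bound on $d_{X\bowtie Y}$ in terms of $d_X$ and $d_Y$, and no property of the norm gives one. A short path in $X\times Y$ between two points of $X\bowtie Y$ generally leaves $X\bowtie Y$: moving the $X$-coordinate changes $h_X$ and forces compensating motion in $Y$.

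\textbf{The fix.} The paper gets the bound from Lemma 2.2.3 of \cite{CaplingerLevitin}, $d_{X\bowtie Y}\le d_X+d_Y+\min(d_X,d_Y)$, applied to $\gamma_i(s)$ and $\gamma(s)$. It is easy to prove directly.
\begin{enumerate}
\item From $(x,y)$, run along a geodesic $[x,x']$ in $X$ while moving $y$ along a fixed vertical geodesic through $y$ so as to keep $h_Y=-h_X$. Since $h_X$ is $1$-Lipschitz, each $Y$-increment is at most the corresponding $X$-increment. Monotonicity and $N(1,1)=1$ then bound this leg by $d_X(x,x')$. It ends at $(x',y_1)$ with $d_Y(y,y_1)=|h_X(x)-h_X(x')|\le d_X(x,x')$.
\item Swap roles: run along $[y_1,y']$ while moving $x'$ along a vertical geodesic through $x'$. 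This returns to $x'$ because $h_Y(y')=-h_X(x')$. The leg has length at most $d_Y(y_1,y')\le d_X(x,x')+d_Y(y,y')$.
\end{enumerate}
Choosing which factor to move first gives the bound. With this inserted, your argument goes through.
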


\begin{proof}
    Every vertical geodesic in $X\bowtie Y$ projects to a vertical geodesic in the two factors, and conversely two geodesics $V,W$ in the factors give a vertical geodesic $V\bowtie W \subset X\bowtie Y$. We show that this bijection $f:\mathcal{V} \to \partial_\infty X\times \partial_\infty Y$ is in fact a homeomorphism.

    By \cite[Theorem 4.4.2.]{TopPropContFuncSpaces} and the following remark, for topological spaces $Z,W$, the space of continouous function $W\to Z$, $C(W,Z)$ with the compact-open topology is metrizable if and only if $W$ is hemicompact. In particular, $\mathcal{V}\subset C(\R,X\bowtie Y)$ is a sequential space. Since $X$ and $Y$ are both millefeuille spaces, the parabolic boundaries $\partial_\infty X$ and $\partial_\infty Y$ are metrizable. Then it suffices to show that a sequence $V_i\bowtie W_i \in \mathcal{V}$ converges if and only if $V_i\subset X$ and $W_i\subset Y$ also converges. 

    If $V_i\bowtie W_i \in \mathcal{V}$ converge uniformly on compact sets in $X\bowtie Y$, then so must their projections $V_i$ and $W_i$, since the projection onto each factor is distance-decreasing. Conversely, let $V_i \to V$ in $X$ and $W_i \to W$ in $Y$ uniformly on compact sets. Lemma 2.2.3. of \cite{CaplingerLevitin} says that $d_{X\bowtie Y} \leq d_X + d_Y + \min (d_X, d_Y)$. Then $V_i \bowtie W_i$ also converges to $V \bowtie W$ uniformly on compact sets. This completes the proof that $\mathcal{V}$ and $\partial_\infty X\times \partial_\infty Y$ are homeomorphic.
    
\end{proof}

\subsection{Maps on the vertical boundary}

Let $G$ act geometrically on $X\bowtie Y$ and suppose $G$ is generated by a finite set $S = S\inv$. Then $\Cay(G,S)$ is quasi-isometric to $X\bowtie Y$ by the Milnor-Schwartz lemma (\cref{lem:Milnor-Schwartz}). If $G$ also acts coboundedly on a metric space $Z$, then by \cref{lem:posets_equiv} and the remark following \cref{def:hyp_poset}, there is a dominating map $\pi_Z:X\bowtie Y \to Z$. 

\begin{lemma}
    \label{lem:vert_to_quasi}
    Let $G$ act geometrically on $X\bowtie Y$ and coboundedly on a hyperbolic space $Z$. Assume that the action on $Z$ is focal. Then the coarsely equivariant map $\pi_Z: X\bowtie Y \to Z$ sends vertical geodesics $\gamma$ in $X\bowtie Y$ to $(K,C)$-quasi-geodesics $\pi_Z\gamma$ in $Z$, with $(K,C)$ not depending on $\gamma$.
\end{lemma}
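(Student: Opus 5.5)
The strategy is to get the upper bound for free from the coarse Lipschitz property and to extract the lower bound from the Busemann character of the focal action. The plan is to show that along a vertical geodesic, the displacement in $Z$ is controlled from below by the height change. That height change is exactly the length of the vertical geodesic.

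The upper bound needs no work. The map $\pi_Z$ is $(A,B)$-coarsely Lipschitz and a height-parameterized vertical geodesic $\gamma$ is a geodesic in $X\bowtie Y$, so
\[
d_Z(\pi_Z\gamma(s),\pi_Z\gamma(s'))\le A|s-s'|+B.
\]

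For the lower bound I would build a height-like function on $Z$. Since the action on $Z$ is focal, $G$ fixes a unique point $\xi\in\partial Z$. Since $G$ is amenable, \cref{prop:ccmt_Busemann_function} gives a Busemann character $\beta_\xi:G\to\R$. This character is nonzero: $Z$ is cobounded and focal, so $G$ has hyperbolic elements. By Part 1 of \cref{thm:main} (equivalently \cref{prop:virt_ab}), $\beta_\xi=\lambda h$ for some $\lambda\neq0$.

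Next fix a horofunction $b$ on $Z$ centered at $\xi$. It is $1$-Lipschitz and satisfies $b(gz)=b(z)+\beta_\xi(g)+O(1)$, with the error uniform because $\xi$ is fixed; this is the standard quasi-character property. Now compare $b\circ\pi_Z$ with $\lambda$ times the height on $X\bowtie Y$. Pick a basepoint $x_0$ and, for any $x$, a group element $g_x$ with $d(g_xx_0,x)\le C$ by coboundedness. Coarse equivariance then gives
\[
b(\pi_Z(x))=b(\pi_Z x_0)+\lambda h(g_x)+O(1).
\]
Heights on $X\bowtie Y$ satisfy $\mathrm{ht}(x)=\mathrm{ht}(x_0)+h(g_x)+O(1)$, so $b\circ\pi_Z=\lambda\,\mathrm{ht}+\mathrm{const}+O(1)$ uniformly. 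Along $\gamma$ this yields
\[
d_Z(\pi_Z\gamma(s),\pi_Z\gamma(s'))\ge |b(\pi_Z\gamma(s))-b(\pi_Z\gamma(s'))|\ge|\lambda||s-s'|-O(1).
\]
All constants depend only on $A$, $B$, $C$, the coarse equivariance constant, $\delta$ and $\lambda$, and not on $\gamma$. Hence $\pi_Z\gamma$ is a $(K,C)$-quasi-geodesic with uniform constants.

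The main obstacle is making the horofunction argument honest in a non-proper hyperbolic space $Z$. Horofunctions at $\xi$ need to be defined via sequences converging to $\xi$, for example as a $\limsup$ of $d(z,z_n)-d(z_0,z_n)$, and one must check the uniform quasi-morphism identity $b(gz)-b(z)\approx\beta_\xi(g)$ with error depending only on $\delta$. If that is delicate, I would instead use the homomorphism directly: choose $t$ with $h(t)=1$ and use that $t$ is hyperbolic on $Z$ with translation length $\tau>0$. Points at heights differing by $n$ are within bounded distance of translates differing by $t^n$ times an element of $\ker h$. Showing that $\ker h$ moves points only ``horizontally'' still requires the Busemann quasi-character, so the horofunction route seems unavoidable and is the step I would verify carefully, citing \cite{CCMT} for the uniform estimate.
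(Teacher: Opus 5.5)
Your proof is correct and follows the same route as the paper: the upper bound comes from the coarse Lipschitz property, and the lower bound comes from a coarsely Lipschitz height-type function on $Z$ whose composition with $\pi_Z$ is coarsely a nonzero multiple of the height on $X\bowtie Y$. The only difference is packaging: the paper takes $h_Z\colon Z\to\R$ to be an abstract domination onto the unique lineal structure and gets $h_Z\circ\pi_Z\approx h$ from coarse commutativity (\cref{lem:coarse_commutativity}), whereas you build $h_Z$ concretely as a horofunction at $\xi$ (the same construction the paper carries out in the following lemma) and check the coarse agreement directly via coboundedness.
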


\begin{proof}
    Let $\gamma: \R \to X\bowtie Y$ be a vertical geodesic. Since $\pi_Z$ is coarsely Lipschitz, there are constants $A>1, \ B>0$ so that \begin{equation}\label{eqn:upperBound}d(\pi_Z (\gamma_t), \pi_Z(\gamma_s))\leq A |t-s| + B \end{equation} for all $t,s\in\R$. Let $h_Z: Z \to \R$ be the coarsely equivariant, coarsely Lipschitz map to $\R$ (unique up to finite distance by \cref{lem:coarse_commutativity}). Then there are constants $a > 1, \ b >0$ so that $$d_\R(h_Z (\pi_Z \gamma_t), h_Z(\pi_Z\gamma_s)) \leq a \cdot d(\pi_Z(\gamma_t), \pi_Z(\gamma_s)) + b.$$ By coarse commutativity, (\cref{lem:coarse_commutativity}) $d_\infty(h_Z\circ \pi_Z, h) < \infty$. Since $\gamma$ is vertical (that is, $h(\gamma_t) = t$), there is some $C>0$ so that $|h_Z (\pi_Z \gamma_t) - t| \leq C.$ Then $$|t-s| \leq a \cdot d(\pi_Z(\gamma_t), \pi_Z(\gamma_s)) + b + 2C,$$ and therefore \begin{equation}\label{eqn:lowerBound}\frac{1}{a} |t-s| - \frac{1}{a}(b + 2C) \leq d(\pi_Z \gamma_t, \pi_Z \gamma_s).\end{equation} From Equations \ref{eqn:upperBound} and \ref{eqn:lowerBound}, $\pi_Z\gamma$ is a quasi-geodesic in $Z$.

\end{proof}

We next argue that one of the endpoints of $\pi_Z\gamma$ is the unique fixed point $\Fix_{\partial Z}(G):= \infty$, thereby giving a map $\partial \pi_Z: \mathcal{V} \to \partial_\infty Z$ by taking the other endpoint of $\pi_Z \gamma$. This will follow from the fact that $h_Z(\pi_Z\gamma(\R))$ is coarsely dense in $\R$ (by coarse commutativity). 



\begin{lemma}
    We adopt the above setup: Let $G$ act geometrically on $X\bowtie Y$, and coboundedly, quasi-parabolically on $Z$ with fixed point $\Fix_{\partial Z}(G) = \xi$. Let $h:G \to \R$ be the height function, representing the unique lineal structure. Let $h_Z:Z\to \R$ be a coarsely Lipschitz, coarsely equivariant map, and let $\eta$ be a quasi-geodesic in $\Z$ whose image under $h_Z$ is coarsely dense in $\R$. Then one of the endpoints of $\eta$ is $\xi$. 
    
\end{lemma}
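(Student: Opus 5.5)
We plan to argue by contradiction, using the structure of focal actions together with the fact that $h_Z$ is a coarse version of the Busemann function at $\xi$. Since the action on $Z$ is quasi-parabolic and cobounded, and $G$ is amenable, \cref{prop:ccmt_Busemann_function} gives a Busemann character $\beta_\xi:G\to\R$. By Part 1 of \cref{thm:main}, $\beta_\xi$ is a nonzero multiple of $h$. First we upgrade this to a statement on the space itself. Fix a basepoint $z_0\in Z$ and a Busemann (horofunction) $b_\xi:Z\to\R$ at $\xi$, which is $1$-Lipschitz and coarsely equivariant in the sense $b_\xi(gz)=b_\xi(z)+\beta_\xi(g)+O(\delta)$. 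Coboundedness makes $b_\xi$ coarsely equivariant with respect to the lineal action on $\R$ via $\beta_\xi$. Coarse commutativity (\cref{lem:coarse_commutativity}), applied to $b_\xi$ and $\lambda h_Z$ for the appropriate $\lambda\neq 0$, then shows that $|b_\xi-\lambda h_Z|$ is uniformly bounded on $Z$. Consequently $b_\xi\circ\eta$ is coarsely dense in $\R$; in particular it is unbounded in both directions.

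Next we use the geometry of quasi-geodesics in a hyperbolic space. Let $\eta_\pm$ be the two endpoints of $\eta$, and by the Morse lemma replace $\eta$ by a bi-infinite geodesic (or a uniform quasi-geodesic ray union) with the same endpoints, at bounded Hausdorff distance. Suppose neither endpoint equals $\xi$. Consider an ideal triangle with vertices $\eta_-,\eta_+,\xi$. Along a geodesic ray converging to a point $\zeta\neq\xi$, the Busemann function $b_\xi$ tends to $+\infty$ (it grows linearly, up to the constant $(\zeta,\xi)$-type error coming from the thin triangle). Hence along $\eta$ the function $b_\xi$ tends to $+\infty$ at both ends, so it is bounded below on $\eta$. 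Here $b_\xi$ is the distance-type function that increases away from $\xi$; with the other sign convention it is bounded above. Either way, $b_\xi\circ\eta$ is not coarsely dense in $\R$, which contradicts the first paragraph. Therefore one endpoint of $\eta$ is $\xi$.

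The main obstacle is making the first paragraph rigorous when $Z$ is not proper. We need the Busemann function at $\xi$ to be defined up to bounded error, via sequences or quasi-geodesic rays to $\xi$. We also need it to be coarsely equivariant with defect controlled by $\delta$, and to satisfy $b_\xi(\gamma(t))\to+\infty$ along any quasi-geodesic ray toward a point other than $\xi$. We would take these standard facts from \cite{CCMT}, where $\beta_\xi$ is constructed exactly from such a horofunction, and check the last property directly using the Gromov-product estimate
\[
b_\xi(x)\geq d(z_0,x)-2(x,\xi)_{z_0}-O(\delta),
\]
noting that $(\gamma(t),\xi)_{z_0}$ stays bounded when $\gamma(t)\to\zeta\neq\xi$.
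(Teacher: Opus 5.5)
Your proposal is correct and follows essentially the same route as the paper. Like the paper, you identify $h_Z$, up to a nonzero scalar $\lambda$ and bounded error, with a horofunction at $\xi$, using the CCMT Busemann character, the one-dimensionality of $\Hom(G,\R)$, and coarse commutativity, and then finish with a Gromov-product estimate.

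The only difference is the direction of the last step. You argue contrapositively: if both ends of $\eta$ avoid $\xi$, the horofunction is bounded on one side along $\eta$. The paper argues directly that points of $\eta$ where the horokernel tends to $+\infty$ converge to $\xi$. Your Morse-lemma and ideal-triangle detour is unnecessary, since your estimate $b_\xi(x)\geq d(z_0,x)-2(x,\xi)_{z_0}-O(\delta)$ applies directly to the two halves of $\eta$.
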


The idea of the proof is that $h_Z$ is, up to bounded distance and scaling, a coarse Busemann function about $\xi$, so quasi-geodesic $\eta$ satisfying $h_Z(\eta(t))\to \infty$ must approach $\xi$.

\begin{proof}
    A \textit{horokernel} is an accumulation point in the topology of pointwise convergence of a sequence of functions $$f_n:Z\times Z \to \R \qquad f_n(x,y) = d(x,x_n) - d(y,x_n),$$ where $(x_n)_{n\in \N}$ is a sequence converging to $\xi$. Let $b = \lim_{\alpha \in A} f_{n_\alpha} $ be some horokernel, written as a limit of a subnet of $(f_n)_{n \in \N}$ defined using  some sequence $x_n$ approaching $\xi$. Fix a basepoint $z_0$ and set $h_Z'(z) = b(z_0,z)$. Also let $S_Z \subset G$ be a generating set making the orbit map $\mathcal{O}:\Cay(G,S_Z)\to \Z$ given by $g\to g\cdot z_0$ a quasi-isometry, and let $\mathcal{O}\inv$ denote a coarse inverse.

    Let $\beta:G \to \R$ be the Busemann character associated to $\xi$. Since $G\ab$ is virtually $\Z$, there is some $\lambda \neq 0$ so that $\beta = \lambda h$. By Propositions 3.7 and 3.9 of \cite{CCMT}, the Busemann character $\beta$ is at bounded distance from $g \to b(z_0, g\cdot z_0)$. This is exactly the map $h_Z'\circ \mathcal{O}$. Then $h_Z'$ is a bounded distance from $\beta \circ \mathcal{O}\inv = \lambda h \circ \mathcal{O}\inv$, hence $\frac{1}{\lambda} h_Z'$ is a bounded distance from $h\circ \mathcal{O}\inv$, a domination. By \cref{lem:coarse_commutativity}, $\frac{1}{\lambda}h_Z'$ is a bounded distance from $h_Z$, so we may take $h_Z = \frac{1}{\lambda} h_Z'$. 

    Let $\eta$ be a quasi-geodesic in $Z$ with $h_Z(\eta)$ coarsely dense in $\R$. Then there is a sequence $t_i \in \R$ approaching either $\pm \infty$ so that $h_Z'(\eta(t_i)) \to \infty$. We will now show that $\eta(t_i)$ limits to $\xi$. We now compute \begin{align*}
        2 (\eta(t_i) \mid x_n)_{z_0} & = d(z_0, \eta(t_i)) + d(z_0, x_n) - d(\eta(t_i), x_n)\\
        &\geq d(z_0, x_n) - d(\eta(t_i),x_n).
    \end{align*}

    The limit of the last expression along the subnet is exactly $h_Z'(\eta(t_i))$, so $$\lim_{i\to \infty}\left (\lim_{\alpha \in A}  (\eta(t_i) \mid x_{n_\alpha})_{z_0}    \right) = \infty.$$ By possibly passing to a subsequence of the $\eta(t_i)$, we may assume that $$\lim_{\alpha\in A}(\eta(t_i)\mid x_{n_\alpha})_{z_0}> i+\delta.$$

    For each $i,$ choose $\alpha(i) \in A$ so that $n_{\alpha(i)} \geq i$ and $(\eta(t_i) \mid x_{n_{\alpha(i)}})_{z_0} \geq i + \delta$. Let $M > 0$. Then there is some $N$ so that if $k,j > N$, then $(x_k \mid x_j)_{z_0} > M + \delta$. Now, if $i,j > \max(N,M)$, then $n_{\alpha(i)} \geq N$, so $$(\eta(t_i) \mid x_{n_{\alpha(i)}})_{z_0} \geq i + \delta \geq M+ \delta \quad \text{and} \quad (x_{n_{\alpha(i)}} \mid x_j)_{z_0} \geq M + \delta.$$ Then $$(\eta(t_i) \mid x_j)_{z_0} \geq \min\big( (\eta(t_i) \mid x_{n_{\alpha(i)}})_{z_0} , (x_{n_{\alpha(i)}} \mid x_j)_{z_0}\big) - \delta \geq M.$$ So $\lim_{i,j\to \infty} (\eta(t_i) \mid x_j)_{z_0} = \infty$, meaning the sequences $\eta(t_i)$ and $x_n$ represent the same points at infinity, namely $\xi$.

\end{proof}

There is therefore a function $\partial \pi_Z: \mathcal{V} \to \partial_\infty Z$ defined by sending a vertical geodesic $\gamma$ to the end of $\pi_Z(\gamma)$ not equal to the unique fixed point $\Fix_{\partial Z}(G)$. We now show that this induced boundary map is continuous.


\begin{lemma}
     Let $G$ act geometrically on $X\bowtie Y$ and coboundedly on a hyperbolic space $Z$. Assume that the action on $Z$ is focal with fixed point $\infty \in \partial Z$. Then the induced map $\partial \pi_Z : \mathcal{V}\to \partial_\infty Z$ is continuous.
\end{lemma}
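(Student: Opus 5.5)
The plan is to prove sequential continuity. $\mathcal{V}$ is metrizable: it is homeomorphic to $\partial_\infty X\times\partial_\infty Y$, and both factors are metrizable. So it suffices to take vertical geodesics $\gamma_i\to\gamma$ uniformly on compact sets and show that $\partial\pi_Z(\gamma_i)\to\partial\pi_Z(\gamma)$ in $\partial Z$. We work with the sequential boundary and Gromov products based at $z_0=\pi_Z(\gamma(0))$. A basic neighborhood of $\partial\pi_Z(\gamma)$ consists of points whose Gromov product with $\partial\pi_Z(\gamma)$ is at least $r$, up to a uniform $\delta$-dependent error. So the goal is: for every $r$, eventually $(\partial\pi_Z(\gamma_i)\mid\partial\pi_Z(\gamma))_{z_0}\ge r$.

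\textbf{Step 1: fix the direction.} By the previous lemma and \cref{lem:coarse_commutativity}, $h_Z\circ\pi_Z$ is within bounded distance $C$ of the height. Along each vertical geodesic, $h_Z(\pi_Z\gamma(t))\approx t$. The end of $\pi_Z\gamma$ at which $h_Z\to\infty$ is therefore the fixed point $\xi$, so the end at $t\to-\infty$ is $\partial\pi_Z(\gamma)$. This holds uniformly in $\gamma$, since the constants $(K,C)$ from \cref{lem:vert_to_quasi} and the coarse commutativity constant do not depend on $\gamma$.

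\textbf{Step 2: compare far-out points.} Fix $T>0$. Uniform convergence on $[-T,0]$ gives $d(\gamma_i(-T),\gamma(-T))\to0$ and $d(\gamma_i(0),\gamma(0))\to 0$. Since $\pi_Z$ is coarsely Lipschitz, the images are eventually within $2B$ of each other, where $B$ is the additive Lipschitz constant. By the Morse lemma, $\pi_Z\gamma_i$ and $\pi_Z\gamma$ are uniform $(K,C)$-quasi-geodesics. Each stays uniformly close to a geodesic ray (or a sequence of geodesic segments, since $Z$ need not be proper) from $z_0$ towards its endpoint. The negative end of $\pi_Z\gamma$ restricted to $(-\infty,-T]$ leaves to infinity at linear rate. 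Consequently the Gromov product $(\pi_Z\gamma(-T)\mid\partial\pi_Z(\gamma))_{z_0}$ is at least $d(z_0,\pi_Z\gamma(-T))-M\ge T/K-M'$ for constants $M,M'$ depending only on $(K,C,\delta)$. The same bound holds for $\gamma_i$ with the same constants. Since $\pi_Z\gamma_i(-T)$ is within $2B$ of $\pi_Z\gamma(-T)$, the hyperbolic inequality
\[
(a\mid c)\ge\min\{(a\mid b),(b\mid c)\}-\delta
\]
then yields
\[
(\partial\pi_Z(\gamma_i)\mid\partial\pi_Z(\gamma))_{z_0}\ge T/K-M''
\]
for all large $i$. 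Letting $T\to\infty$ gives convergence.

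\textbf{Main obstacle.} The delicate point is that $Z$ is not proper. Boundary points must be handled via sequences, and the uniform estimate relating $(\pi_Z\gamma(-T)\mid\lim_{s\to-\infty}\pi_Z\gamma(s))_{z_0}$ to $T$ needs care. I would try to prove it first using the standard fact that for a $(K,C)$-quasi-geodesic $\eta$ one has $(\eta(s)\mid\eta(s'))_{\eta(0)}\ge\min(|s|,|s'|)/K-M$ when $s,s'$ have the same sign. This follows from the Morse lemma applied to finite subsegments, and it passes to the $\liminf$ defining boundary Gromov products.

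There is also a base-point issue, since the basepoint $\pi_Z\gamma_i(0)$ varies with $i$. It is handled by changing basepoint, which costs at most $d(\pi_Z\gamma_i(0),z_0)\le 2B$.
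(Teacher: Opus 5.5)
Your proof is correct and is essentially the paper's argument. Coarse Lipschitzness turns $\gamma_i\to\gamma$ into bounded closeness of $\pi_Z\gamma_i$ and $\pi_Z\gamma$ at times $0$ and $-T$, and your three-term four-point estimate through $\pi_Z\gamma_i(-T)$ and $\pi_Z\gamma(-T)$, with Morse-lemma lower bounds linear in $T$, is exactly the paper's auxiliary lemma on fellow-travelling quasi-geodesic rays in a possibly non-proper hyperbolic space.

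One correction to Step 1. Normalize $h_Z$ so that $h_Z\circ\pi_Z$ is close to the height; then the horofunction at $\xi$ equals $\lambda h_Z$ up to bounded error. Consequently, which end of $\pi_Z\gamma$ tends to $\xi$ depends on the sign of $\lambda$, and it flips between $\mathcal{P}_+(G)$ and $\mathcal{P}_-(G)$ (compare $Z=X$ with $Z=Y$). Since that sign depends only on $Z$, the same end is the non-$\xi$ end for every vertical geodesic, and you simply run your estimate at $+T$ instead of $-T$ when $\lambda<0$.
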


\begin{proof}
    Let $\gamma_i$ be a sequence of vertical geodesics in $X\bowtie Y$ which converge uniformly on compact sets to some $\eta$. Then for all $\epsilon > 0$ and all $M > 0$, there is an $N$ so that if $n > N$, then $d(\gamma_n(t), \eta(t)) < \epsilon$ for all $t \in [-M,M]$. Let $A>1, \ B>0$ be the coarsely Lipschitz constants, so that $$d(\pi_Z \gamma_n(t), \pi_Z \eta(t)) \leq A \epsilon + B.$$ Choose $\epsilon = 1/A$. Then for all $M >0$, there is an $N$ so that if $n \geq N$, the quasi-geodesics $\pi_Z\gamma_n$ and $\pi_Z\eta$ $(B+1)$-fellow-travel for $M$ time. In the following lemma, we show that this is sufficient to conclude that $[\pi_Z\gamma_n] \to [\pi_Z\eta]$ in the boundary $\partial_\infty Z$.
    
\end{proof}

The author was not able to find a reference to the below fact in the required generality. Similar statements appear in several texts with the additional assumption that $Z$ is proper, in which case one has access to the Arzela-Ascoli theorem. We wish to avoid this assumption, so we are forced to use Gromov products.

\begin{lemma}
    Let $\gamma_n$ be a sequence of $(K,C)$ quasi-geodesic rays in a hyperbolic space and $\eta$ another $(K,C)$ quasi-geodesic. Assume that there is some $B>0$ so that for all $M > 0$, there is an $N$ so that every $n \geq N$ satisfies $d(\gamma_n(t), \eta (t)) < B$ for all $t \in [0,M]$. Then $[\gamma_i] \to [\eta]$ in $\partial_\infty Z$.
    
\end{lemma}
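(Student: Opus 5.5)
We work throughout with the sequential boundary and Gromov products based at a fixed basepoint; we never use properness of $Z$. Set $o = \eta(0)$. By a basepoint change (which moves Gromov products by at most $d(o,\gamma_n(0)) < B$) we may assume every ray starts within $B$ of $o$. By the definition of the topology on $\partial Z$, it suffices to show the following. For every $r>0$ there is an $N$ such that for all $n \geq N$ we have $\liminf_{s,t\to\infty}(\gamma_n(s),\eta(t))_o \geq r$. The sequences $(\gamma_n(k))_k$ and $(\eta(k))_k$ represent $[\gamma_n]$ and $[\eta]$, so this places $[\gamma_n]$ in the basic neighborhood $U_r$ of $[\eta]$.

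\textbf{Step 1 (Gromov products along a quasi-geodesic ray grow linearly).} Let $\delta$ be the hyperbolicity constant. We use quasi-geodesic stability (the Morse lemma), which holds in any geodesic $\delta$-hyperbolic space with no properness assumption. It gives a constant $R = R(K,C,\delta)$ such that any $(K,C)$ quasi-geodesic segment is within Hausdorff distance $R$ of a geodesic between its endpoints. Consequently, for a $(K,C)$ quasi-geodesic ray $\sigma$ with $d(\sigma(0),o)\le B$ and any $s \le t$, we get
\[
(\sigma(s),\sigma(t))_o \geq d(o,\sigma(s)) - c_1 \geq \tfrac{s}{K} - c_2,
\]
where $c_1,c_2$ depend only on $K,C,\delta,B$. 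The reason is that $\sigma(s)$ lies within $R$ of a geodesic $[\sigma(0),\sigma(t)]$, and the Gromov product $(x,y)_o$ is within $O(\delta)$ of $d(o,[x,y])$. The key feature is that the constants are uniform in the ray, which is why we need all $\gamma_n$ to share the constants $(K,C)$.

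\textbf{Step 2 (transfer through the fellow-travelling window).} Given $r$, choose $M$ with $M/K - c_2 \geq r + B + 3\delta$, and let $N$ be as in the hypothesis for this $M$. Fix $n\ge N$ and large $s,t \geq M$. We chain Gromov products via the hyperbolic inequality $(x,z)_o \ge \min\{(x,y)_o,(y,z)_o\}-\delta$:
\[
(\gamma_n(s),\eta(t))_o \ge \min\{(\gamma_n(s),\gamma_n(M))_o,\ (\gamma_n(M),\eta(M))_o,\ (\eta(M),\eta(t))_o\} - 2\delta .
\]
We bound the three terms separately. The first and last are each at least $M/K - c_2$ by Step 1. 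For the middle term, $d(\gamma_n(M),\eta(M))<B$ gives $(\gamma_n(M),\eta(M))_o \ge d(o,\eta(M)) - B \ge M/K - c_2 - B$. Hence $(\gamma_n(s),\eta(t))_o \ge r$ for all $s,t\ge M$, which proves the claim. If the quasi-geodesics are only coarsely continuous, the same argument works after replacing $\gamma_n$ and $\eta$ by a tamed version; this changes only the constants.

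\textbf{Main obstacle.} The only real work is Step 1, namely obtaining the linear lower bound on $(\sigma(s),\sigma(t))_o$ uniformly over all $(K,C)$ quasi-geodesic rays without properness. The first tool to try is the Morse lemma applied to the finite segments $\sigma|_{[0,t]}$ only. Those segments have endpoints in $Z$, so geodesics between them exist and no limiting argument is needed; the rest is bookkeeping with $\delta$.
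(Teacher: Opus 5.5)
Your proposal is correct and follows essentially the same route as the paper. The paper also sets $o=\eta(0)$ and chains the Gromov product through $\gamma_n(M)$ and $\eta(M)$ with a loss of $2\delta$. It bounds the outer two terms by a Morse-lemma estimate on the finite segment $[\gamma(0),\gamma(t)]$, which is your Step 1, and bounds the middle term by $\frac{1}{K}M-C-B$ using $d(\gamma_n(M),\eta(M))<B$.
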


\begin{proof}

    A basis of neighborhoods of $[\eta]$ is given by $$U_r = \{q \in \partial X \mid \exists(x_i)_i, (y_i)_i \ \text{so that}\ [(x_i)] = [\eta], [(y_i)] = q \ \text{and} \ \liminf_{i,j\to \infty} (x_i,y_i)_o \geq r\},$$ where $(x,y)_o$ is the Gromov product based at $o$ and we represent boundary points with sequences $(x_i)$ satisfying $\liminf_{i,j \to \infty} (x_i,x_j)_o$ (See \cite[Definition 2.13]{BoundariesHyperbolicGroups}). Recall that the boundary point corresponding to a quasi-geodesic ray $\alpha$ is represented by the sequence $(\alpha(i))_i$. Then it suffices to show that for each $r > 0$, there is an $N$ so that for all $n \geq N$, we have $\liminf_{i,j\to \infty}(\gamma_n(i),\eta(j))_o \geq r$. 

    Let $M < t,s$ and $\gamma, \eta$ be $(K,C)$ quasi-geodesic rays (which we will later specialize to the above $\gamma_n$ and $\eta$). We apply the four points condition twice, first with $\gamma(M)$, then with $\eta(M)$.  
\begin{align*}
        (\gamma(t), \eta(s))_o &\geq \min\{(\gamma(t), \gamma(M))_o, (\gamma(M),\eta(s))_o\} - \delta\\
        &\geq \min\{(\gamma(t), \gamma(M))_o,(\gamma(M),\eta(M))_o, (\eta(M),\eta(s))_o\} - 2\delta
    \end{align*}

    We now bound the first terms of the minimum. The same argument and bound works for the third term. Let $D$ be the morse constant for $(K,C)$ quasi-geodesics, and let $p$ be a point on the geodesic connecting $\gamma(0)$ and $\gamma(t)$ so that $d(p, \gamma(M))\leq D$. Using the reverse triangle inequality and the identity $d(\gamma(0), p) + d(p,\gamma(t)) = d(\gamma(0), \gamma(t))$, we now compute:

    \begin{align*}
        (\gamma(t),p)_o &= \frac{1}{2} \left( d(\gamma(t),o) + d(p, o) - d(\gamma(t),p)    \right) \\
        & \geq \frac{1}{2} \left( d(\gamma(t), \gamma(0)) -d (o, \gamma(0)) + d(p,\gamma(0)) - d(o, \gamma(0)) - d(\gamma(t),p)        \right) \\
        &= \frac{1}{2} \left( d(\gamma(t), \gamma(0)) -d (o, \gamma(0)) + d(p,\gamma(0)) - d(o, \gamma(0)) + d(\gamma(0), p) - d(\gamma(0),\gamma(t))     \right) \\
        &= d(\gamma(0),p) - d(\gamma(0),o) \\
        &\geq d(\gamma(0),\gamma(M)) - D - d(\gamma(0),o) \\
        &\geq \frac{1}{K}M-C - D - d(\gamma(0),o) \\
    \end{align*}

    Then $(\gamma(t),p)_o \geq \frac{1}{K}M-C - D - d(\gamma(0),o)$, and by replacing $p$ with $\gamma(M)$, we obtain $(\gamma(t),\gamma(M))_o \geq \frac{1}{K}M-C - 2D - d(\gamma(0),o)$.

    Let $r >0$ and choose the basepoint to be $o = \eta(0)$. Choose $M$ large enough so that $$\frac{1}{K}M - C -2D -B-2\delta > r.$$ By hypothesis, there exists some $N$ so that for all $n > N$, we have $d(\gamma_n(t), \eta(t)) < B$ for all $t \in [0,M]$. In particular, the preceding paragraph shows that the first and third terms of the minimum are larger than $r + 2\delta$. 
    
    We now bound the middle term of the above minimum.
    \begin{align*}
        (\gamma_n(M),\eta(M))_{\eta(0)} &= \frac{1}{2}\left( d(\gamma_n(M),\eta(0)) + d(\eta(M),\eta(0)) - d(\gamma_n(M),\eta(M))   \right) \\
        &\geq \frac{1}{2}\left( d(\gamma_n(M),\gamma_n(0)) - B + d(\eta(M),\eta(0)) - d(\gamma_n(M),\eta(M))   \right) \\
        &\geq \frac{1}{2} \left ( \frac{1}{K}M -C - B + \frac{1}{K}M - C - B \right) \\
        &= \frac{1}{K}M - C - B
    \end{align*}

    Then the second term of the minimum is also larger than $r + 2\delta$. 

    Then for all neighborhoods $U_r$ of $[\eta]$, there is some $N$ so that for all $n \geq N$, $[\gamma_n] \in U_r$, and hence $[\gamma_n] \to [\eta]$ in the topology of $\partial_\infty Z$.

\end{proof}

\subsection{$\partial \pi_Z$ factors through $\partial \pi_X$.}

We will now show that if $[Z] \in \mathcal{P}_+(G)$, then $\partial \pi_Z: \mathcal{V} \cong \partial_\infty X \times \partial_\infty Y \to \partial_\infty Z$ factors through the projection onto the first factor. The idea is that an element $t \in G$ with $h(t)>0$ acts on $\partial_\infty X $ and $\partial_\infty Z$ by expansions, while it acts on $\partial_\infty Y$ by contractions. Since the boundary map $\partial \pi_Z$ is equivariant, the differing dynamics is incompatible with sending elements of the same slice $\{\alpha\} \times \partial_\infty Y$ to different elements.

\begin{lemma}
    Let $[Z] \in \mathcal{P}_+(G)$. Then the map $\partial \pi_Z: \partial_\infty X \times \partial_\infty Y \to \partial_\infty Z$ factors through the projection onto the first factor.
\end{lemma}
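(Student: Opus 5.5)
We must show that $\partial\pi_Z(\alpha,\beta)=\partial\pi_Z(\alpha,\beta')$ for all $\alpha\in\partial_\infty X$ and $\beta,\beta'\in\partial_\infty Y$. The argument is a dynamical one: we take a limit of the pair under $t^{-n}$, where $t$ is a hyperbolic element with $h(t)>0$, and then compare it in $Z$ using $t^{n}$.

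\textbf{Step 1: the estimate in $X\bowtie Y$.} Let $\gamma=V\bowtie W$ and $\gamma'=V\bowtie W'$ be the two vertical geodesics with common $X$-coordinate $V$ (representing $\alpha$) and $Y$-coordinates $W,W'$ representing $\beta,\beta'$. Since $W$ and $W'$ are vertical geodesics in the $\CAT(-\kappa)$ space $Y$ asymptotic to $\infty_Y$, they are exponentially asymptotic in the direction of $\infty_Y$. With the convention that $\gamma$ is height-parameterized by $h=h_X=-h_Y$, the geodesics $W(s)$ and $W'(s)$ converge as $s\to-\infty$. Concretely, there is $s_0$ with $d_Y(W(s),W'(s))\le 1$ for all $s\le s_0$, and of course $d_X(V(s),V(s))=0$. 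Using $d_{X\bowtie Y}\le d_X+d_Y+\min(d_X,d_Y)$, we get $d(\gamma(s),\gamma'(s))\le 1$ for all $s\le s_0$. So the two vertical geodesics fellow travel on the whole ray $(-\infty,s_0]$.

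\textbf{Step 2: transfer to $Z$.} Since $\pi_Z$ is coarsely Lipschitz, $\pi_Z\gamma$ and $\pi_Z\gamma'$ are $(A+B)$-fellow travellers on $(-\infty,s_0]$. The quasi-geodesics of \cref{lem:vert_to_quasi} have one endpoint at $\xi=\Fix_{\partial Z}(G)$ and the other at $\partial\pi_Z(\cdot)$. I must decide which direction of time goes to $\xi$. Because $h_Z\circ\pi_Z$ is at bounded distance from $h$, and $h_Z$ is (a positive or negative multiple of) a coarse Busemann function about $\xi$, the end of time in which $\pi_Z\gamma$ approaches $\xi$ is the same for every vertical geodesic. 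This is where the hypothesis $[Z]\in\mathcal{P}_+(G)$ enters. The element $t$ translates the height up and fixes its attracting point $\xi$, which means $\xi$ is reached as $s\to+\infty$. In the same way, $\partial\pi_Z$ records the end at $s\to-\infty$, which is the end on which Step 1 gives fellow travelling. Two quasi-geodesic rays that stay at bounded distance on $(-\infty,s_0]$ define the same boundary point, by the preceding lemma on Gromov products (take $M\to\infty$ with a constant sequence). Hence $\partial\pi_Z(\alpha,\beta)=\partial\pi_Z(\alpha,\beta')$.

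\textbf{Main obstacle.} The main difficulty is the orientation bookkeeping in Step 2. We must verify that, for $Z\in\mathcal{P}_+$, the non-fixed end of $\pi_Z\gamma$ corresponds to the time direction in which the $Y$-coordinates converge, and not the direction in which the $X$-coordinates converge. This uses footnote-level conventions: that $t$ has positive height change, that $h_X=-h_Y$, and that the attracting point of $t$ in $\partial Z$ coincides with the fixed point $\xi$ approached as $h_Z\to+\infty$. I would argue it via the Busemann character: $\beta_\xi=\lambda h$, and $t$ attracts toward $\xi$ exactly when $t$ moves points toward $\xi$, which fixes the sign of $\lambda$.

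If this orientation argument turns out to be delicate, I have a fallback using equivariance and dynamics. We have $\partial\pi_Z(t^n\cdot(\alpha,\beta))=t^n\partial\pi_Z(\alpha,\beta)$, and $t^{n}$ contracts $\partial_\infty Y$ toward a point while $t^{-n}$ contracts $\partial_\infty Z$ (its attracting point on $\partial Z$ being $\xi$). Combining this with continuity of $\partial\pi_Z$ and a compactness/metric argument on $\partial_\infty Z$ should force the fibres over $\partial_\infty Y$ to collapse. I expect the direct fellow-travelling argument above to be cleaner.
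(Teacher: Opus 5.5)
Your proof is correct, and it takes a different route from the paper's.

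\textbf{The paper's route.} The paper argues through boundary dynamics. It takes the fixed point $(\alpha_0,\beta_0)$ of $t$ on $\mathcal{V}$ and notes that $t^k\cdot(\alpha_0,\beta)\to(\alpha_0,\beta_0)$. Continuity and equivariance of $\partial\pi_Z$ then give $t^k\,\partial\pi_Z(\alpha_0,\beta)\to\zeta_0$. Since $\zeta_0$ is the repelling point of $t$ on $\partial Z$, this forces $\partial\pi_Z(\alpha_0,\beta)=\zeta_0$. That gives constancy on slices over repelling endpoints of hyperbolic elements. The general case follows from density of those endpoints in $\partial_\infty X$ (invoked from coboundedness) together with continuity once more.

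\textbf{Your route.} You observe geometrically that $V\bowtie W$ and $V\bowtie W'$ stay at bounded distance as $s\to-\infty$, because $W,W'$ are asymptotic toward $\infty_Y$. You then check that for $Z\in\mathcal{P}_+(G)$ the $s\to-\infty$ end is exactly the end recorded by $\partial\pi_Z$.

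\textbf{Comparison.} Your argument works pointwise for every $\alpha$. It uses neither continuity of $\partial\pi_Z$ nor the density claim. The price is the orientation bookkeeping, which the paper gets for free from the attracting/repelling dichotomy of $t$ on $\partial Z$. Your Busemann-character argument settles it, and it is worth stating in full.
\begin{itemize}
\item Write $\beta_\xi=\lambda h$. Then $\lambda>0$ exactly because $\xi$ is attracting for $t$.
\item The horokernel argument in the paper's lemma shows that the end of $\pi_Z\gamma$ along which $\lambda h\to+\infty$, namely $s\to+\infty$, converges to $\xi$.
\item This holds uniformly in $\gamma$, since $\lambda$ is global. Alternatively, check it on $\pi_Z\Axis_{X\bowtie Y}(t)$ using $\pi_Z\gamma_0(s+nh(t))\approx t^n\pi_Z\gamma_0(s)\to\xi$, and then use the uniformity.
\end{itemize}

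\textbf{Minor simplifications.} You only need $d_Y(W(s),W'(s))$ to be bounded for $s\le s_0$, which already follows from $\CAT(0)$ convexity; exponential decay is not required. The last step also needs only $(x\mid y)_o\ge d(o,x)-d(x,y)$, rather than the full Gromov-product lemma.
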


\begin{proof}
    Let $t \in G$ satisfy $h(t) > 0$ and let $(\alpha_0, \beta_0) \in \partial_\infty X \times \partial_\infty Y \cong \mathcal{V}$ be the unique fixed point on the vertical boundary. Let $\beta \in \partial_\infty Y$ be any boundary point. Since $h(t) > 0$, $\alpha_0$ is repelling and $\beta_0$ is attracting. Then $t^k\cdot (\alpha_0, \beta) \to (\alpha_0,\beta_0)$ hence also $\partial \pi_Z (t^k\cdot (\alpha_0, \beta)) \to \partial \pi_Z(\alpha_0,\beta_0) =: \zeta_0$ by continuity. Equivariance gives that $t^k\partial \pi_Z(\alpha_0,\beta) \to \zeta_0$. Since $Z \in \mathcal{P}_+(G)$, we know that $\zeta_0 \in \partial_\infty Z$ is a repelling point of $t$ and therefore if $\partial \pi_Z (\alpha_0, \beta) \neq \zeta_0$, then $t^k \partial \pi_Z(\alpha_0,\beta)$ will not converge to $\zeta_0$. 

    The above paragraph implies that for every $\alpha \in \partial_\infty X$ which is an endpoint of some hyperbolic element of $G$, the map $\partial \pi_Z$ is constant on $\{\alpha\} \times \partial_\infty Y$. Since $G$ acts coboundedly on $X$, such endpoints are dense in $\partial_\infty X$, hence $\partial \pi_Z$ is constant on \textit{every} slice $\{\alpha\} \times \partial_\infty Y$, which is to say it factors through the projection onto the first factor. To see this last assertion, let $\alpha \in \partial_\infty X$ and choose boundary points $\alpha_n$ limiting to $\alpha$ such that $\alpha_n$ is an endpoint of some hyperbolic element of $G$. Then $\partial \pi_Z(\alpha_n,\beta) = \partial \pi_Z(\alpha_n,\beta')$ by the previous paragraph, and by taking the limit, $\partial \pi_Z(\alpha,\beta) = \partial \pi_Z(\alpha,\beta')$.
\end{proof}

\subsection{$X$ is maximal in $\mathcal{P}_+(G)$.}

We will now show that the action on $X$ is maximal in $\mathcal{P}_+(G)$. Let $[Z] \in \mathcal{P}_+(G)$ be a focal hyperbolic structure dominating $X$ via a coarsely Lipschitz, coarsely equivariant map $f:Z \to X$. We will explicitly construct a quasi-inverse to $f$.

Let $t \in h\inv([1,\infty))$ and let $W = \Axis_Y(t) \subset Y$. Then the subset $X \bowtie W \subset X\bowtie Y$ is coarsely isometric to $X$ (see \cite[Corollary 4.2.5.]{CaplingerLevitin}) with coarse isometry $i_W:X \to X\bowtie W$ given by $i_W(x) = (x, W(h(x)))$. Set $q = \pi_Z \circ i_W$. By coarse commutativity, $d_\infty(f\circ \pi_Z, \pi_X) < \infty$, where $\pi_X:X\bowtie Y \to X$ is the projection to $X$. Then $$f\circ q = f\circ \pi_Z \circ i_W \sim \pi_X \circ i_W =  \id_X,$$ which we think of as saying that $q$ is coarsely injective.

\begin{lemma}
\label{lem:q_coarsely_surjective}
    The above map $q$ is coarsely surjective.
\end{lemma}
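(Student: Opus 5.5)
The plan is to show that every point of $Z$ lies within bounded distance of $q(X)$. Since $G$ acts coboundedly on $Z$, it is enough to show that every point $z\in Z$ is uniformly close to some quasi-geodesic $\pi_Z\gamma$, where $\gamma$ is a vertical geodesic of $X\bowtie W$. Because $\pi_Z\circ i_W = q$ and vertical geodesics of $X\bowtie W$ are exactly $V\bowtie W$ for vertical geodesics $V$ of $X$, the images $\pi_Z(V\bowtie W)$ all lie in $q(X)$. By \cref{lem:vert_to_quasi} they are uniform $(K,C)$-quasi-geodesics, and by the endpoint lemma each has one end at the fixed point $\xi=\Fix_{\partial Z}(G)$ and the other end at $\partial\pi_Z(V,\beta_W)$. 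Here $\beta_W$ denotes the lower endpoint of $W$.

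\textbf{Step 1: every point of $\partial_\infty Z$ is hit.} By the factorization lemma, $\partial\pi_Z(\alpha,\beta)$ depends only on $\alpha$. So it suffices to show that $\partial\pi_Z:\mathcal{V}\to\partial_\infty Z$ is surjective. For this I would argue that its image is a nonempty $G$-invariant set. The image is also closed: continuity of $\partial\pi_Z$, together with cocompactness of the $G$-action on $\mathcal V$ relative to $\xi$, gives this. (Concretely, choose a compact $K_0\subset\mathcal V$ whose translates cover $\mathcal V$; this exists because $G$ acts coboundedly on $X\bowtie Y$, and vertical geodesics through a compact set form a compact family.) Since the action on $Z$ is focal, $G$ acts minimally on the limit set $\Lambda(G)\setminus\{\xi\}$. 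For a cobounded action, $\Lambda(G)=\partial Z$, so every closed invariant nonempty subset of $\partial_\infty Z$ is dense. I expect the closedness and density argument to be the main obstacle, because $Z$ need not be proper. If it fails, I would instead go directly to Step 2, which only needs coarse density of the image.

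\textbf{Step 2: every point is near such a quasi-geodesic.} Let $z\in Z$. By coboundedness, $z$ is within bounded distance of $g\cdot z_0$, where $z_0=q(x_0)$ for a base point $x_0$ on a fixed vertical geodesic $V_0$. Then $g\cdot z_0$ is within bounded distance of $g\cdot\pi_Z(V_0\bowtie Y)$, and by coarse equivariance this set is within bounded distance of $\pi_Z(gV_0\bowtie gW)$. The remaining issue is to replace $gW$ by $W$. The quasi-geodesics $\pi_Z(gV_0\bowtie gW)$ and $\pi_Z(gV_0\bowtie W)$ have the same endpoints in $\partial Z$: both share $\xi$, and by the factorization lemma they share $\partial\pi_Z(gV_0,\cdot)$. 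By Morse stability for bi-infinite uniform quasi-geodesics with common endpoints in a $\delta$-hyperbolic space, they are at Hausdorff distance bounded by a constant depending only on $(K,C,\delta)$. This stability must be applied in the non-proper setting, using the Gromov-product formulation of the boundary from the earlier lemma.

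Combining these, $z$ lies within uniformly bounded distance of $\pi_Z(gV_0\bowtie W)\subset q(X)$. This proves coarse surjectivity; notably, Step 1 is not needed for this conclusion.
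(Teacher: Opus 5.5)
Your Step 2 is essentially the paper's proof. The paper takes the base vertical geodesic to be $\Axis_{X\bowtie Y}(t)=\Axis_X(t)\bowtie W$, so that $g$ carries it to $\Axis(gtg^{-1})$; it then uses the factorization of $\partial\pi_Z$ through $\partial_\infty X$ and the Morse constant to replace $gW$ by $W$, exactly as you do. (Note that $g\cdot\pi_Z(V_0\bowtie Y)$ should read $g\cdot\pi_Z(V_0\bowtie W)$.) As you observe, Step 1 is not needed and can be dropped.
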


\begin{proof}

Choose a basepoint $(x_0,y_0) \in \Axis_{X\bowtie Y}(t) \subset  X\bowtie Y$ and set $z_0 = \pi_Z(x_0,y_0)$. Let $z \in Z$. Then there is some $C>0$ not depending on $z$ and some $g\in G$ so that $d(g\cdot z_0,z) < C$. Let $D$ be the coarse equivariance constant and $M$ be the Morse constant for $(K,C)$-quasigeodesics, where $(K,C)$ is as in \cref{lem:vert_to_quasi}.

Then $z$ lies in the $C$ neighborhood of $g\cdot \pi_Z \Axis_{X\bowtie Y}(t)$, and therefore also in the $(C+D)$-neighborhood of $\pi_Z(g\cdot \Axis_{X\bowtie Y} (t)) = \pi_Z(\Axis_{X\bowtie Y} (gtg\inv))$ 

Let $(\alpha_0,\beta_0) \in \mathcal{V}$ be the fixed point of $t$ (the endpoints of $\Axis_{X\bowtie Y}(t)$). Then the endpoints of $\Axis(gtg\inv)$ are $(g\cdot \alpha_0,g\cdot \beta_0)$. Let $V_g\subset X$ denote the vertical line corresponding to $g\cdot \alpha_0$. Since $\partial \pi_Z$ factors through the projection onto the first factor, $\partial \pi_Z (g\cdot \alpha_0,g\cdot \beta_0) = \partial \pi_Z (g\cdot \alpha_0,\beta_0)$ and hence the quasigeodesics $\pi_Z(V_g \bowtie W)$ and $\pi_Z(\Axis_{X\bowtie Y}(gtg\inv))$ have the same endpoints in $\partial Z$ and therefore have Hausdorff distance at most $M$. Then $z$ is $(C+D+M)$ close to the image of $V_g$ under $q$, meaning $q$ is coarsely surjective. 

\end{proof}

That $q$ is the quasi-inverse of $f$ is now a simple exercise in coarse geometry.

\begin{lemma}
\label{lem:coarse_geo_exercise}
    If $f:Z \to X$ and $q:X\to Z$ are coarsely Lipschitz, coarsely surjective maps which satisfy $d_\infty(f\circ q, \id_X) < \infty $ then $f$ and $q$ are quasi-isometries.
\end{lemma}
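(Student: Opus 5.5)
The plan is to first show that $q$ is a quasi-isometric embedding, and then upgrade $f$ using the fact that $q$ is a coarse inverse on both sides. Let $(A,B)$ be common coarse Lipschitz constants for $f$ and $q$, and let $E = d_\infty(f\circ q,\id_X)$. For $x,x' \in X$ we have
\[
d_X(x,x') \leq d_X(fq(x),fq(x')) + 2E \leq A\, d_Z(q(x),q(x')) + B + 2E,
\]
so $d_Z(q(x),q(x')) \geq \frac{1}{A}d_X(x,x') - \frac{B+2E}{A}$. Combined with the upper coarse Lipschitz bound and the assumed coarse surjectivity, this shows that $q$ is a quasi-isometry.

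Next we show that $q\circ f$ is close to $\id_Z$. Let $R$ be the coarse surjectivity constant of $q$. For $z \in Z$, choose $x$ with $d_Z(q(x),z)\leq R$. Then
\[
d_Z(qf(z),z) \leq d_Z(qf(z),qfq(x)) + d_Z(qfq(x),q(x)) + R.
\]
The first term is at most $A(AR+B)+B$. The second term is at most $AE+B$, since $d_X(fq(x),x)\leq E$. Hence $d_\infty(q\circ f,\id_Z)<\infty$.

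It remains to get the lower bound for $f$. For $z,z'\in Z$,
\[
d_Z(z,z') \leq d_Z(qf(z),qf(z')) + 2E' \leq A\,d_X(f(z),f(z')) + B + 2E',
\]
where $E' = d_\infty(q\circ f,\id_Z)$. This is the desired lower bound. Coarse surjectivity of $f$ is given by hypothesis, and in any case it follows from $fq\sim\id_X$. So $f$ is a quasi-isometry as well.

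There is no real obstacle here. The only step requiring care is the second one, where $q\circ f \sim \id_Z$ is derived from coarse surjectivity of $q$ together with $f\circ q\sim \id_X$; this is where the coarse surjectivity hypothesis is genuinely used. Everything else is bookkeeping with the constants.
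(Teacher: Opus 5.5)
Your proof is correct and takes essentially the paper's route: you derive $q\circ f\sim\id_Z$ from the coarse surjectivity of $q$ together with $f\circ q\sim\id_X$, and you get each lower bound by composing with the other coarsely Lipschitz map. The only differences are that you work with explicit constants, and you prove $q$'s lower bound first, since it needs only $f\circ q\sim\id_X$; the paper establishes the two-sided inverse first and then says the argument for $q$ is similar.
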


We adopt the notation $x\sim y$ to mean that $d(x,y)$ is uniformly bounded in terms of constants implicit in the statement of the lemma.

\begin{proof}
    We first claim that $d_\infty(q\circ f, \id_Z) < \infty $. Let $z \in Z$. Then there is some $x \in X$ so that $q(x) \sim z$. Then $f(q(x)) \sim f(z)$, so $x\sim f(z)$. Applying $q$, we have $q(x) \sim q(f(z))$, so that $z \sim q(f(z))$. This proves the claim.

    We now show that $f$ is a quasi-isometry. The proof for $q$ is similar. Let $D$ the maximum of $d_\infty(q\circ f, \id_Z)$ and $d_\infty(f\circ q, \id_X)$ and let $(A,B)$ be the maximum of the coarse Lipschitz constants for $f$ and $q$. Let $z,w \in Z$. Since $q$ is coarsely Lipschitz, $$d_Z (q(f(z)), q(f(w))) \leq A d_X(f(z), f(w)) + B.$$ The left hand side differs from $d_Z(z,w)$ by at most $2D$, so we obtain the lower bound of $$\frac{1}{A}d_Z(z,w) - \frac{B}{A} -\frac{2D}{A} \leq  d_X (f(z),f(w)) \leq Ad_
    Z(z,w) + B. $$

\end{proof}

The coarse inverse of a coarsely equivariant quasi-isometry is also coarsely equivariant, (see \cite[Lemma 6.2.1.]{CaplingerLevitin}) so the two action of $G$ on $X$ and $Z$ are coarsely equivalent as coarse $G$-spaces. This shows that the poset $\mathcal{P}_+(G)$  has $X$ as a maximal element.

We are finally ready to prove Part 3 of \cref{thm:main}.

\begin{proof}[Proof of \cref{thm:main}]

    Lemmas 4.3, 4.4 and 4.5 of \cite{Valuations} together imply that the posets $\mathcal{P}_{\pm}(G)$ admit supremum operations. We are careful to note that these proofs, unlike their infimum counterparts, do not rely on the fact that the group $G = A\rtimes \Z$ in question has $A$ abelian. Combining \cref{lem:q_coarsely_surjective} and \cref{lem:coarse_geo_exercise}, we see that $X$ is a maximal element in $\mathcal{P}_+(G)$, so for any other element, $W \in \mathcal{P}_+(G)$, the supremum $\sup (X, W) = X$ dominates $W$. This proves that $[X] \in \mathcal{P}_+(G)$ is a maximum element. The proof that $[Y] \in \mathcal{P}_-(G)$ is maximal is identical.

\end{proof}

\section{$S$-arithmetic Sol: a group acting geometrically on a horocyclic product of mixed millefeuille spaces.}
\label{sec:New_Group}

In this section, we construct a group acting geometrically on a horocyclic product of two mixed millefeuille spaces, giving a partial answer to \cite[Question 1]{CaplingerLevitin}. This construction is an $S$-arithmetic analog of the classical example of $$\Z^2 \rtimes_B \Z \subset \R^2 \rtimes_{\Diag(e^t, e^{-t})} \R = \Sol = \h^2 \bowtie \h^2$$ for $|\tr(B)| > 2$. 

Let $p$ be prime and let $A \in \GL_2(\Z[1/p])$ be any matrix conjugate\footnote{One may take the diagonal matrix itself, but we take a general matrix to emphasize that the roles played by the real and $p$-adic parts are symmetric.} in $\GL_2(\Z[1/p])$ to $\begin{pmatrix}
    p & 0 \\ 0 & 1/p
\end{pmatrix}$. The group under consideration is $$G_p = \left(\Z\left[\frac{1}{p}\right]\right)^2 \rtimes_A \Z.$$ This group appears as $\Gamma_2/Z(\Gamma_2)$ in \cite{AbelsBrown87} and in Problem 1.12 of \cite{BCGS}. We now show that $G_p$ acts geometrically on a horocyclic product of mixed millefeuille spaces.


Let $\Q_p$ denote the $p$-adic numbers and let $\h_p^2$ denote the Heintze group $\R \rtimes_{p^t} \R$ with the Euclidean metric at the origin (this is a re-scaled hyperbolic plane with curvature $-(\ln{p})^2$). Let $F:\R\times \Q_p \to \R \times \Q_p$ be defined by $F(x,y) = (p\cdot x,y/p)$. Note that $F$ is expanding, and $(\R\times \Q_p) \rtimes_F \Z$ is an amenable, hyperbolic locally compact group. 

Identify $\Q_p$ with $\partial_\infty T_p$, the parabolic boundary of the regular $(p+1)$-valent tree, and note that every similarity of $\Q_p$ extends to an isometry on $T_p$. Then $(\R\times \Q_p) \rtimes_F\Z\subset \Aff(\R) \times \Aff(\Q_p)$ acts on both $\h^2_p$ and on $T_p$. These two actions have the same height function, so induce an isometric action on the mixed millefeuille space $\h^2_p[p]$. Since the action of $\R\times \Q_p$ is cobounded (in fact transitive) on the $0$-horosphere, the action of $(\R\times \Q_p) \rtimes_F\Z$ on $\h^2_p[p]$ is also cobounded. Let $\alpha: (\R \times \Q_p)^2  \to (\R \times \Q_p)^2 $ be defined by $\alpha(x,y) = (F(x), F\inv(y))$. Then $(\R \times \Q_p)^2 \rtimes_\alpha \Z $ acts coboundedly on $\h^2_p[p] \bowtie \h^2_p[p]$---the coboundedness can again be seen by noting that $(\R \times \Q_p)^2$ acts transitively on the $0$-horosphere. 

We now define an action of $G_p$ on $\h^2_p[p] \bowtie \h^2_p[p]$. Let $P_\R, P_{\Q_p} \in \GL_2(\Z[1/p])$ be matrices such that $$P_\R A P_\R\inv = \begin{pmatrix} p & 0 \\ 0& \frac{1}{p} 
\end{pmatrix} \quad \text{and} \quad P_{\Q_p}  A P_{\Q_p}\inv = \begin{pmatrix} \frac{1}{p} & 0 \\ 0 & p \end{pmatrix}.$$ After regrouping coordinates $(\R \times \Q_p)^2  = \R^2 \times \Q_p^2$, define $Q: \R^2 \times \Q_p^2 \to \R^2 \times \Q_p^2$ by $Q = \Diag(P_\R, P_{\Q_p})$, which is to say the linear map which acts by $P_\R$ on $\R^2$ and by $P_{\Q_p}$ on $\Q_p^2$. Similarly let $\overline{A} = \Diag(A,A)$. Then $\alpha = Q \overline{A} Q\inv $. Let $\iota: \Z[1/p]\to \R \times \Q_p$ denote the diagonal embedding. The identity $Q\overline{A} Q\inv = \alpha$ now implies that the map $f:G_p \to (\R \times \Q_p)^2 \rtimes_\alpha \Z$ defined by $(v,m) \to (Q(\iota(v)), m)$ is a homomorphism. 

Since $\iota(\Z[1/p]^2) \subset (\R\times \Q_p)^2$ is discrete, $$f(G_p) \subset (\R\times \Q_p)^2 \rtimes_\alpha \Z \subset \Isom(\h^2_p[p] \bowtie \h^2_p[p])$$ is also discrete. The action of $G_p$ on $\h^2_p[p] \bowtie \h^2_p[p]$ is therefore properly discontinuous. Let $C \subset \h^2_p[p] \bowtie \h^2_p[p]$ be a compact set whose $(\R\times \Q_p)^2 \rtimes_\alpha \Z$-translates cover $\h^2_p[p] \bowtie \h^2_p[p]$. Since $\iota(\Z[1/p]) \subset \R \times \Q_p$ is cocompact, there is also a compact $K \subset (\R \times \Q_p)^2 \rtimes_\alpha \Z$ whose $f(G_p)$-translates cover $(\R \times \Q_p)^2 \rtimes_\alpha \Z$. Then the $f(G_p)$-translates of $K\cdot C \subset \h^2_p[p] \bowtie \h^2_p[p]$ cover $\h^2_p[p] \bowtie \h^2_p[p]$, so the action of $G_p$ on $\h^2_p[p] \bowtie \h^2_p[p]$ is geometric.

\bibliographystyle{alpha}
\bibliography{bib}

\end{document}